\definecolor{darkred}{RGB}{139,0,0}
\definecolor{darkgreen}{RGB}{0,100,0}
\definecolor{darkmagenta}{RGB}{139,0,139}
\definecolor{darkorange}{RGB}{220,110,20}
\newcommand{\is}[1]{{\color{blue}{#1}}}
\newtheorem{theorem}{Theorem}
\newtheorem{remark}[theorem]{Remark}
\newtheorem{lemma}[theorem]{Lemma}
\newenvironment{proof}{\begin{trivlist}\item[\hskip\labelsep{\bf Proof.}]}{$\hfill\Box$\end{trivlist}}
\newenvironment{proofof}[1]{\begin{trivlist}
    \item[\hskip\labelsep{\bf Proof of {#1}.}]}{$\hfill\Box$\end{trivlist}}
\newcommand{\esup}{\operatornamewithlimits{ess\,sup}}
\newcommand{\bx}{{\boldsymbol{x}}}
\newcommand{\bu}{{\boldsymbol{u}}}
\newcommand{\bw}{{\boldsymbol{w}}}
\newcommand{\bsone}{{\boldsymbol{1}}}
\newcommand{\bsalpha}{{\boldsymbol{\alpha}}}
\newcommand{\bszeta}{{\boldsymbol{\zeta}}}
\newcommand{\setu}{{\mathfrak{u}}}
\newcommand{\setv}{{\mathfrak{v}}}
\newcommand{\setD}{{\mathfrak{D}}}
\newcommand{\satop}[2]{\stackrel{\scriptstyle{#1}}{\scriptstyle{#2}}}
\newcommand{\R}{\mathbb{R}}
\newcommand{\N}{\mathbb{N}}
\newcommand{\rd}{{\mathrm{d}}}
\newcommand{\calC}{{\mathcal{C}}}
\newcommand{\calL}{{\mathcal{L}}}
\newcommand{\calW}{{\mathcal{W}}}
\newcommand{\bse}{{\boldsymbol{e}}}
\newcommand{\bsx}{{\boldsymbol{x}}}
\newcommand{\bsy}{{\boldsymbol{y}}}
\newcommand{\bsr}{{\boldsymbol{r}}}
\newcommand{\bsgamma}{{\boldsymbol{\gamma}}}
\newcommand{\bseta}{{\boldsymbol{\eta}}}
\newcommand{\bmix}{{\mathrm{mix}}}
\newcommand{\bbR}{{\mathbb{R}}}
\newcommand{\mask}[1]{}
\newcommand{\ind}{\mathop{\rm ind}}
\newcommand{\mix}{\mathrm{mix}}
\title{High dimensional integration of kinks and jumps \\
--  smoothing by preintegration}
\author{Andreas Griewank, Frances Y.~Kuo, Hernan Le\"ovey and Ian H.~Sloan}
\date{August 2017}
\begin{document}

\maketitle

\begin{abstract}
We show how simple kinks and jumps of otherwise smooth integrands over
$\R^d$ can be dealt with by a preliminary integration with respect to a
single well chosen variable. It is assumed that this preintegration, or
conditional sampling,  can be carried out with negligible error, which is
the case in particular for option pricing problems. It is proven that
under appropriate conditions the preintegrated function of $d-1$ variables
belongs to appropriate mixed Sobolev spaces, so potentially allowing high
efficiency of Quasi Monte Carlo and Sparse Grid Methods applied to the
preintegrated problem. The efficiency of applying Quasi Monte Carlo to the
preintegrated function are demonstrated on a digital Asian option using
the Principal Component Analysis factorisation of the covariance matrix.

\end{abstract}

\section{Introduction} \label{sec:intro}
 \vspace{.2cm}

In the present paper we analyse a natural method for numerical integration
over $\R^d$, where $d$ may be large, in the presence of ``kinks'' (i.e.\
discontinuities in the gradients) or ``jumps'' (i.e.\ discontinuities in
the function).  In this method one of the variables is integrated out in a
``preintegration'' step, with the aim of creating a smooth integrand over
$\R^{d-1}$.

Integrands with kinks and jumps arise in option pricing, because an option
is normally considered worthless if the value falls below a predetermined
strike price.  In the case of a continuous payoff function this introduces
a kink, while in the case of a binary or other digital option it
introduces a jump.

A simple strategy is to ignore the kinks and jumps, and apply directly a
method for integration over $\R^d$. While there has been very significant
recent progress in \emph{Quasi Monte Carlo \textnormal{(}QMC\textnormal{)}
methods} \cite{DKS13} and \emph{Sparse Grid \textnormal{(}SG\textnormal{)}
methods} \cite{BG04} for high dimensional integration when the integrand
is somewhat smooth, there has been little progress in understanding their
performance when the integrand has kinks or jumps.

The performance of QMC and SG methods is degraded in the presence of kinks
and jumps, but perhaps not as much as might have been expected, given that
in both cases the standard error analysis fails in general for kinks and
jumps: the standard assumption in both cases is that the integrand has
mixed first partial derivatives for all variables, or at least that it has
bounded Hardy and Krause variation over the unit cube $[0,1]^d$, whereas
even a straight non-aligned kink (one that is not orthogonal to one of the
axes) lacks mixed first partial derivatives even for $d=2$, and generally
exhibits unbounded Hardy and Krause variation on $[0,1]^d$ for $d\ge3$
\cite{Owen05}.

A possible path towards understanding the performance of QMC and SG
methods in the presence of kinks and jumps was developed in \cite{GKS13}.
That paper studied the terms of the ``ANOVA decomposition'' of functions
with kinks defined on $d$-dimensional Euclidean space $\bbR^d$, and showed
that under suitable circumstances all but one of the $2^d$ ANOVA terms can
be smooth, with the single exception of the highest order ANOVA term, the
one depending on all $d$ of the variables. If the ``effective dimension''
of the function is small, as is commonly thought to be the case in
applications, then that single non-smooth term can be expected to make a
very small contribution to both supremum and $\calL_2$ norms. In a
subsequent paper \cite{GKSnote} the same authors showed, by strengthening
the theorems and correcting a mis-statement in \cite{GKS13}, that the
smoothing of all but the highest order ANOVA term is a reality for the
case of an arithmetic Asian option with the Brownian bridge construction.

More precisely, the papers \cite{GKS13} and \cite{GKSnote} showed, for a
function of the form  $f(\bsx)= \max(\phi(\bsx),0)$ with $\phi$ smooth (so
that $f$ generically has a kink along the manifold $\phi(\bsx)=0$), that
if the $d$-dimensional function $\phi$ has a positive partial derivative
with respect to $x_j$ for some $j\in\{1,\ldots,d\}$, and if certain growth
conditions at infinity are satisfied, then all the ANOVA terms of $f$ that
do not depend on the variable $x_j$ are smooth. The underlying reason, as
explained in \cite{GKS13}, is that integration of $f$ with respect to
$x_j$, under the stated conditions, results in a $(d-1)$-dimensional
function that no longer has a kink, and indeed is as often differentiable
as the function~$\phi$.

Going beyond kinks, we prove in this paper that Theorem 1 in
\cite{GKSnote} can be extended from kinks to jumps -- thus jumps are
smoothed under almost the same conditions as kinks. The smoothing occurs
even in situations (such as occur in option pricing) where the location of
the kink or jump treated as a function of the other $d-1$ variables moves
off to infinity for some values of the other variables.

In this paper we pay particular attention to proving that the presmoothed
integrand belongs to an appropriate mixed-derivative function space.

The preintegration method studied in the present paper has appeared as a
practical tool under other names in many other papers, including those
related to ``conditional sampling'' (see \cite{GlaSta01}; the paragraph
leading up to and including Lemma~7.2 in \cite{ACN13a}; the remark at the
end of Section~3 in \cite{ACN13b}), and other root-finding strategies for
identifying where the payoff is positive (see \cite{Hol11,NuyWat12}), as
well as those under the name ``smoothing'' (see \cite{BST17,WWH17}). In
contrast to the cited papers, the emphasis in this paper is on rigorous
analysis. Also, we here prefer the description ``preintegration'' because
to us ``conditional sampling'' suggests a probabilistic setting, which is
not necessarily relevant here.

Even for the classical \emph{Monte Carlo \textnormal{(}MC\textnormal{)}
method} the preintegration step can be useful: to the extent that the
preintegration can be considered exact, there is a reduction in the
variance of the integrand, by the sum of the variances of all ANOVA terms
that involve the preintegration variable $x_j$ (since the ANOVA terms are
eliminated because their exact integrals with respect to $x_j$ are all
zero). In our numerical experiments that reduction proves to be quite
significant.

The problem class and the method are stated in Section~\ref{sec:method}.
Immediately Section~\ref{sec:app} gives numerical examples in the context
of an option pricing problem with 256 time steps, treated as a problem of
integration in 256 dimensions. Section~\ref{sec:var} briefly discusses the
variance reduction by preintegration for $\calL_2$ functions.
Section~\ref{sec:smooth} focuses on the smoothing effect of
preintegration. It gives mathematical background on needed function spaces
and states two new smoothing theorems, extended here in a non-trivial way
from \cite[Theorem~1]{GKSnote}. Section~\ref{sec:apply} applies our
theoretical results to the option pricing example. Technical proofs are
given in Section~\ref{sec:proof1}.

\section{The problem and the method} \label{sec:method}

The problem is the approximate evaluation of
\begin{equation} \label{problem1}
 I_d f
 \,:=\, \int_{\R^d}f(\bsx)\rho_d(\bsx)\,\rd\bsx
 \,=\, \int_{-\infty}^\infty\ldots\int_{-\infty}^\infty
       f(x_1,\ldots,x_d)\,\rho_d(\bsx)\,\rd x_1 \cdots\rd x_d,
\end{equation}
with
\[
  \rho_d(\bsx) \,:=\, \prod_{k=1}^d \rho(x_k),
\]
where $\rho$ is a continuous and strictly positive probability density
function on $\R$ with some smoothness, and $f$ is a real-valued function
integrable with respect to $\rho_d$.

To allow for both kinks and jumps we assume that the integrand is of the
form
\begin{equation} \label{problem2}
 f(\bsx) \,=\, \theta(\bsx)\,\ind(\phi(\bsx)),
\end{equation}
where $\theta$ and $\phi$ are somewhat smooth functions, and $\ind(\cdot)$
is the indicator function which gives the value $1$ if the input is
positive and $0$ otherwise. When $\theta = \phi$ we have $f(\bsx) =
\max(\phi(\bsx),0)$ and thus we have the familiar kink seen in option
pricing through the occurrence of a strike price. When $\theta$ and $\phi$
are different (for example, when $\theta(\bsx) = 1$) we have a structure
that includes binary digital options.

Our key assumption on $\phi(\bsx)$ is that it has a positive partial
derivative (and so is an increasing function) with respect to some
variable $x_j$, that is, we assume that for some $j\in\{1,\ldots,d\}$ we
have
\begin{equation} \label{monotone}
 \frac{\partial \phi}{\partial x_j}(\bsx) > 0
 \qquad \mbox{for all} \quad
 \bsx \in \R^d.
\end{equation}
In other words $\phi$ is monotone with respect to $x_j$.

We also make an assumption about the behavior as $x_j\to +\infty$.  To
state this it is convenient, given $j\in\{1,\ldots,d\}$, to write the
general point $\bsx\in\R^d$ as $\bsx= (x_j,\bsx_{-j})$, where $\bsx_{-j}$
denotes the vector of length $d-1$ denoting all the variables other than
$x_j$. With this notation, a second assumption is that
\begin{equation} \label{unbounded}
  \lim_{x_j\to +\infty } \phi(\bsx)
  \,=\, \lim_{x_j \to +\infty}\phi(x_j,\bsx_{-j})
  \,=\, +\infty
  \qquad\mbox{for fixed}\quad \bsx_{-j}\in \bbR^{d-1}.
\end{equation}
The latter growth property follows automatically if we assume in addition to
\eqref{monotone} that $(\partial^2 \phi/ \partial x_j^2)(\bsx)\ge0$ for
all $\bsx\in \bbR^d$. Additional properties at infinity will be assumed in
Theorems~\ref{thm:main1} and~\ref{thm:main2}.

Assuming that the properties \eqref{monotone} and \eqref{unbounded} both
hold for some $j\in\{1,\ldots,d\}$, the method is easily described: we
write \eqref{problem1} as the repeated integral using Fubini's theorem
\[
  I_d f
  \,=\, \int_{\R^{d-1}}\left(\int_{-\infty}^\infty f(x_j,\bsx_{-j})\,\rho(x_j)\,\rd x_j \right)
  \rho_{d-1}(\bsx_{-j})\,\rd\bsx_{-j},
\]
and first evaluate  the inner integral for each needed value of
$\bsx_{-j}$. This is the ``\emph{preintegration}'' step. The essential
point of the method is that the outer integral can then be evaluated by a
standard QMC or SG method, in the knowledge that the integrand for this
$(d-1)$-dimensional integral is smooth.

Looking more closely at the preintegration step, we write the operation of
integration with respect to $x_j$ as
\begin{equation}\label{firstPj}
  (P_j f)(\bsx_{-j})
  \,:=\, \int_{-\infty}^\infty f(x_j,\bsx_{-j})\,\rho(x_j)\,\rd x_j.
\end{equation}
It follows from \eqref{problem2} and \eqref{monotone} that the integrand
in this integral has generically a jump at the (unique) point at which
$\phi(x_j, \bsx_{-j})$ passes through zero.  By the implicit function
theorem (see Theorem~\ref{thm:implicit} below) for each $\bsx_{-j}$ there
is a unique value $\psi(\bsx_{-j})$ of $x_j$ at which
$\phi(x_j,\bsx_{-j})$ passes from negative to positive values with
increasing $x_j$. The preintegration step may then be written as
\[
  (P_j f)(\bsx_{-j})
  \,=\,
  \int_{\psi(\bsx_{-j})}^\infty f(x_j,\bsx_{-j})\,\rho(x_j )\,\rd x_j.
\]
An essential ingredient in any implementation of the method is the
accurate evaluation of $\psi(\bsx_{-j})$, for each point $\bsx_{-j}$ of
the outer integration rule. The semi-infinite integral $P_jf$ may then be
evaluated, for each needed point $\bsx_{-j}$, by a standard method for
1-dimensional integrals, for example by a formula of Gauss type.  On the
other hand, in certain important applications such as option pricing, the
integration can be performed in more or less closed form.

The monotonicity condition \eqref{monotone} and the infinite growth
condition \eqref{unbounded} imply that for fixed $\bsx_{-j}$ the function
$\phi(x_j, \bsx_{-j})$ either has a simple root $x_j=\psi(\bsx_{-j})$ or
is positive for all $x_j\in\R$. The zero set of $\phi$, denoted by
$$\phi^{-1}(0) \,:=\, \{\bsx\in\R^d:\phi(\bsx)=0\},$$
is then a hypersurface, i.e., a continuous manifold of dimension $d-1$.
However, its projection onto $\R^{d-1}$ obtained by ignoring the component
$x_j$ can be very complicated, even if $\phi$ is highly differentiable.

An example with $d=2$  and $j=1$ illustrating the complications that can
arise is given by
\begin{align}\label{eq:example}
\phi(x_1,x_2)  \,:=\, \begin{cases}
\exp(x_1) - x_2^m \sin(1/x_2) & \mbox{for }\, x_2>0,\\
\exp(x_1) & \mbox{for }\,  x_2\le 0, \end{cases}
\end{align}
for some large $m$. Since $\phi(x_1,x_2)$ is monotonically increasing in
$x_1$, the explicit solution of $\phi(x_1,x_2)=0$ is
$$  x_1 \,= \,   \psi(x_2) \,:=\, m \log(x_2)
+ \log((\sin(1/x_2))_+) \quad \mbox{for }\, x_2 \in U_1,$$ where $z_+ :=
\max(0,z)$, and
\begin{align*}
 U_1 \,:=\,
 \{x_2\in \R \;:\; \phi(x_1,x_2)=0 \;\mbox{ for some }\,x_1\in\R\}
 \,=\, \left  (\tfrac{1}{\pi}, \infty  \right ) \,\cup\,
 \bigcup_{k \in \N}  \left ( \tfrac{1}{ (2 k+1)\pi},  \tfrac{1}{ 2 k\pi} \right ),
\end{align*}
while $\phi(x_1,x_2)=0$ has no solution for $x_2$ in the complicated
complementary set
\[
  U_1^+ \,:=\,
  (-\infty, 0] \,\cup\, \bigcup_{k \in \N}  \left [ \tfrac{1}{ 2 k\pi}, \tfrac{1}{(2 k- 1)\pi} \right ].
\]

\begin{figure}[h]
\vspace*{-3cm}
\begin{center}
\includegraphics[width=12cm,height=14cm]{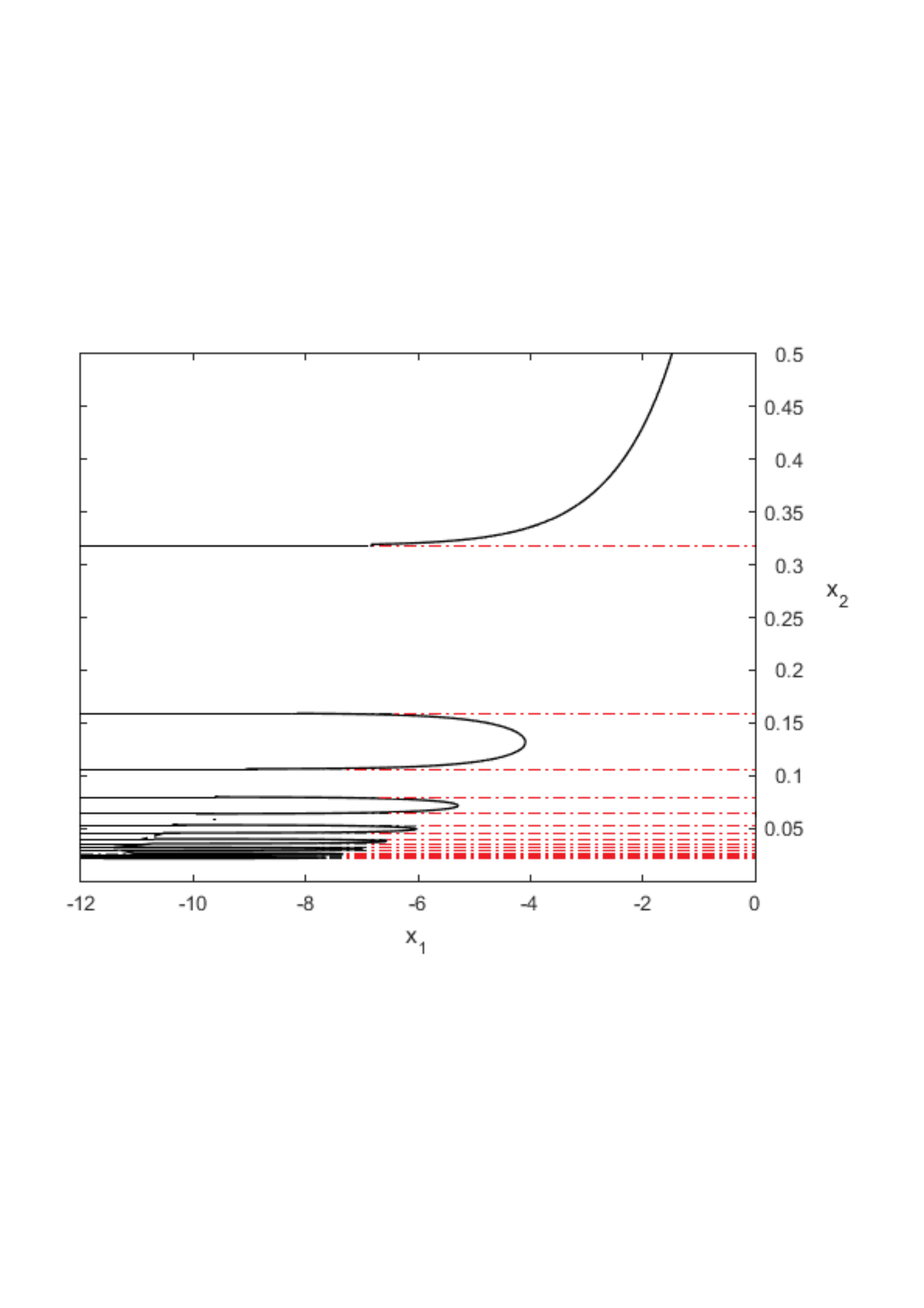}
\end{center}
\vspace*{-4.3cm}
\caption{The zero set $\phi^{-1}(0)$ for the example in
\eqref{eq:example} with $m=2$.} \label{figure:example}
\end{figure}

The graph of the zero set $\phi^{-1}(0)$ for $m=2$ is shown in Figure
\ref{figure:example}. The solid lines represent the zero set, while the
broken lines parallel to the horizontal axis define the boundaries on the
$x_2$ axis between subsets of $U_1^+$ for which there is no solution of
$\phi(x_1,x_2) = 0$ and subsets of $U_1$  for which there is a solution
$x_1 = \psi(x_2)$. The preintegrated version of $f$ given by
\eqref{problem2} for any smooth $\theta$ will rather clearly be
differentiable on both $U_1$ and $U_1^+$, but it is not obvious that this
is the case on the complicated boundary between the two sets. To ensure
the necessary differentiability properties it turns out in Section
\ref{sec:smooth} to be necessary to assume that the functions $\theta$ and
$\phi$ and their derivatives, when multiplied by the appropriate weight
functions, decay sufficiently rapidly as $x_1= \psi(x_2)$ runs to
$-\infty$.


\section{Numerical experiments: application to option pricing} \label{sec:app}

In this section we apply the preintegration method to an option pricing
example, for which the payoff function is discontinuous.

An important aspect of the method presented in this paper is that the user
needs to choose a variable $x_j$ such that the condition \eqref{monotone}
is satisfied. In the paper \cite{GKS13} it is shown that for the standard
and Brownian bridge constructions for path simulation of Brownian motions
every choice of the variable $x_j$ will be suitable.  More interesting for
the present paper is the popular Principal Component Analysis (or PCA)
method of constructing the Brownian motion \cite{Glasserman}: for this
case the only result known to us, from \cite[Section 5]{GKS10}, is that
the property \eqref{monotone} is guaranteed if $x_j$ is the variable that
corresponds to the largest eigenvalue of the Brownian motion covariance
matrix.  For this reason it is of particular interest to apply the present
theory to the PCA case, as we do below.

For our tests, we consider now the example of an arithmetic average
digital Asian option. We assume that the underlying asset $S_t$ follows
the \emph{geometric Brownian motion} model based on the stochastic
differential equation
\begin{equation}\label{eq:brown:sode}
 \rd S_t \,=\, r S_t \,\rd t + \sigma S_t \,\rd W_t\,,
\end{equation}
where $r$ is the risk-free interest rate, $\sigma$ is the (constant)
volatility and $W_t$ is the standard Brownian Motion. The solution of this
stochastic equation can be given as
\begin{equation}\label{eq:brown:solution}
 S_t \,=\, S_0 \exp \left ( \left ( r - \tfrac{\sigma^2}{2}\right ) t + \sigma W_t \right).
 \end{equation}
The problem of simulating asset prices can be reduced to the problem of
simulating discretized Brownian motion paths taking values
\mbox{$W_{t_1},\dots,W_{t_d}$}, where $d$ is the number of time steps
taken in the disctretization of the continuous time period $[0, T]$. In
our tests, the asset prices are assumed to be sampled at equally spaced
times $t_\ell:=\ell \Delta t$, $\ell=1,\dots,d$, where $\Delta t := T/d $.
The Brownian motion is a Gaussian process, therefore the vector
$(W_{t_1},\dots,W_{t_d})$ is normally distributed, and in this particular
case is a vector with mean zero and covariance matrix $C$ given by
\[
C \,=\, (\min( t_\ell,t_k ))_{\ell,k=1}^d .
\]

The value of an arithmetic average digital Asian call option is
\[ 
V \,=\, \frac{e^{-rT}}{(2\pi)^{d/2} \sqrt{\det(C)}}
\int_{\R^d}
{\rm ind}\bigg( \frac{1}{d}\,\sum_{\ell=1}^{d} S_{t_\ell}(w_\ell) - K\bigg)
e^{- \frac{1}{2} \bw^\top C^{-1} \bw} \,\rd\bw \,,
\] 
with $\bw=(W_{t_1},\dots,W_{t_d})^\top$.
After a factorization $C=AA^\top$ of the covariance matrix is chosen (for
the choice of $A$ is not unique), we can rewrite the integration problem
using the substitution $\bw=A\bx$ as
\[ 
V \,=\, \frac{e^{-rT}}{(2\pi)^{d/2}}
\int_{\R^d}
{\rm ind}\bigg( \frac{1}{d}\,\sum_{\ell=1}^{d} S_{t_\ell}((A\bx)_\ell) - K\bigg)
e^{- \frac{1}{2} \bx^\top \bx} \,\rd\bx \,.
\] 
The new variable vector $\bx=(x_1,\ldots,x_d)^{\top}$ can be assumed to
consist of independent standard normally distributed random variables.
Then the identity $\bw=A\bx$ defines a construction method for Brownian
paths. We therefore have an integral of the form
\eqref{problem1}--\eqref{problem2} with $\rho(x) = e^{-x^2}/\sqrt{2\pi}$,
$\theta(\bsx) = e^{-rT}$, and
\begin{equation}\label{phi_explicit}
  \phi(\bsx) \,=\,
  \frac{S_0}{d} \sum_{\ell=1}^d
  \exp\bigg( \left(r-\tfrac{\sigma^2}{2}\right) \ell\Delta t
  +\sigma \sum_{k=1}^d A_{\ell k}\, x_k\bigg) -K.
\end{equation}

We use in our experiments the PCA factorization of $C$, which is based on
the orthogonal factorization
\[
C \,=\, (\bu_{1};\ldots;\bu_{d})\,
{\rm diag}(\lambda_{1},\ldots,\lambda_{d})\,(\bu_{1};\ldots;\bu_{d})^{\top},
\]
where the eigenvalues $\lambda_{1},\ldots,\lambda_{d}$ (all positive) are
given in non-increasing order, with corresponding unit-length column
eigenvectors $\bu_1\ldots,\bu_d$, and as a result
\[
A \,=\, (\sqrt{\lambda_{1}}\bu_{1};\ldots;\sqrt{\lambda_{d}}\bu_{d}).
\]
Note that we have $A_{\ell 1}>0$ for $1\le \ell \le d$ because the
elements of the eigenvector $\bu_1$ 
are all positive.

For approximate integration with quadrature, we generate randomized QMC or
MC samples $\bx^{(1)},\dots,\bx^{(N)}$ over $\R^d$ by first generating
classical randomized QMC or MC samples over unit cube $(0,1)^d$, and then
transforming them to $\R^d$ using in each coordinate the univariate
inverse normal cumulative distribution function $\Phi^{-1}({\cdot})$.
The randomized QMC points over $(0,1)^d$ are obtained by first generating
Sobol$'$ points over $[0,1]^d$ with direction numbers taken from
\cite{JoeKuo08}, and then applying the random linear-affine scrambling
method as proposed by Matousek \cite{Matousek} (as implemented in the
statistics toolbox of MATLAB). Note that taking randomly scrambled
Sobol$'$ points not only allows us to generate statistically independent
QMC samples, but also allows us to avoid in practice having points lying
on the boundary of $(0,1)^d$ (which is usually the case for non-randomized
QMC points), since the boundary is sampled with zero probability. The MC
points were taken from the Mersenne Twister PRNG. For the function
$\Phi^{-1}({\cdot})$, we have used Moro's algorithm~\cite{Glasserman}. The
matrix $A$ can be given explicitly \cite{GKS10}, but more importantly,
each matrix-vector multiplication $A\bx^{(i)}, 1\le i \le N$ can be done
with $O(d\,\log d)$ cost by means of the fast-sine transform \cite{Sche07}
(as long as time steps for discretization are taken of equal size).

For the preintegration approach, we generate randomized QMC or MC points
over $[0,1]^{d-1}$, following the procedure for the $d$-dimensional case,
and so obtain $N$ sample points over $\R^{d-1}$. We then evaluate the
paths without using the first variable $x_1$, i.e., we sample over the
coordinates $x_2,\dots,x_d$. Once a sample point on these coordinates is
fixed, the resulting problem is a one-dimensional integral on the variable
$x_1$. We take then the approximation
\[
 V \,\approx\, Q_{N,d-1} \left( P_1(f)\right)
 \,=\, P_1\left( Q_{N,d-1}(f) \right)
 \,=\, \frac{1}{N} \sum_{i=1}^N \int_{-\infty}^\infty f(\xi,\bx^{(i)})\,\rho(\xi) \,\rd \xi,
\]
where the quadrature with respect to $\xi$ is to be carried out for each
of the $N$ sample points $\bx^{(i)}$ in $\R^{d-1}$. In the PCA case in
this problem, the resulting $N$ univariate integrals can be calculated in
terms of the normal cumulative distribution function by completing squares
and identifying the points $\xi^{(i)}_\star, 1\le i \le N$ (if they
exist), where we have $\frac{1}{d}\,\sum_{\ell=1}^{d} S_{t_\ell}((A
(\xi^{(i)}_\star,\bx^{(i)})^{\top})_\ell)=K$. Finding the points
$\xi^{(i)}_\star, 1\le i \le N$, is not a difficult numerical task since
each of them can be obtained as the root of an equation defined by a
univariate convex function, for which Newton's method converges in few
steps to a satisfactorily accurate solution.

\begin{figure}[t]
\begin{center}
\includegraphics[width=16cm, height=8cm]{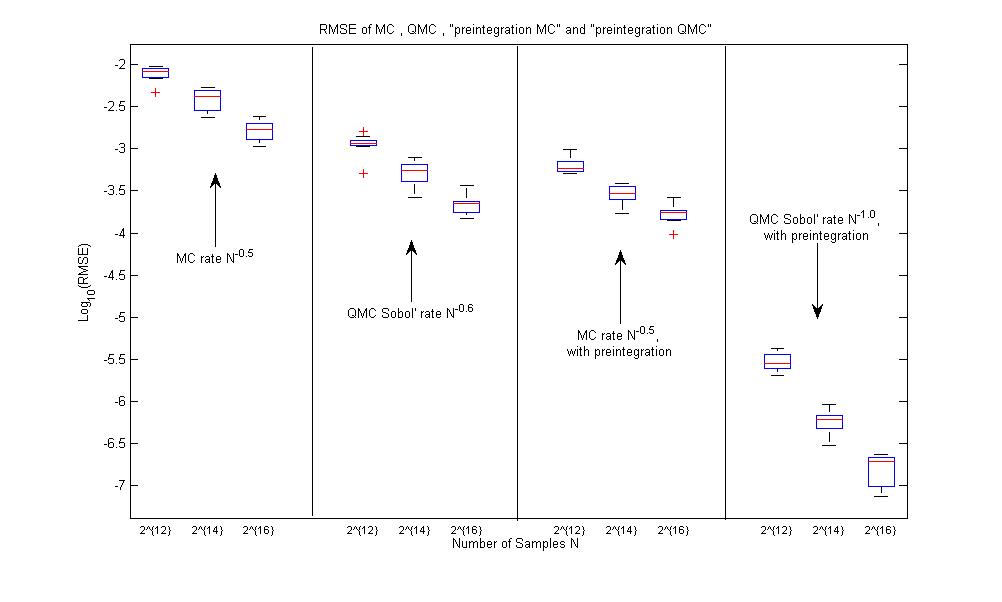}
\end{center}
\vspace{-1cm}
\caption{Root mean square errors for (from left to right) Monte Carlo,
Quasi Monte Carlo, preintegrated Monte Carlo and preintegrated Quasi Monte Carlo}
\label{fig:PCA}
\end{figure}

The parameters in our tests were fixed to $K=100,S_0=100, r=0.1 ,
\sigma=0.1, T=1$. We summarize our numerical experiments in
Figure~\ref{fig:PCA}. In the figure we show the box-plots of the
$\log_{10}$ of relative root mean square error (RMSE), each obtained from
10 independent random replications, with PCA factorization of covariance
matrix for the arithmetic average digital Asian option. Results are shown
in four groups containing three box-plots each. Each group corresponds to
one of the following method: in order, MC, QMC, MC with preintegration and
QMC with preintegration. In each group we have three box-plots to
characterize the error convergence, each box-plot containing RMSE sampled
with a given sample size. For all integration methods we chose the sample
sizes $N=2^{12},2^{14},2^{16}$. Note that for MC and QMC we generate
samples over $\R^d$, while for the preintegration MC and QMC we generate
samples only over $\R^{d-1}$.

The results show that randomized QMC exhibits higher convergence than MC,
but the convergence rate is still not optimal ($\sim N^{-0.6}$). When we
combine the preintegration method with MC, we observe an improvement in
the implied error constant, as predicted, but the convergence rate remains
the same as MC ($= N^{-0.5}$), as of course it should. Combining the
preintegration method with randomized QMC reduces the error
satisfactorily, and improves the convergence rate to close to the best
possible rate $N^{-1.0}$.

\section{Variance reduction by preintegration} \label{sec:var}

In this section we consider the space $\calL_{2,\rho_d}$ of
square-integrable functions on $\bbR^d$, with $\rho_d$-weighted $\calL_2$
inner product and norm.

The preintegration step \eqref{firstPj} can be viewed more generally as a
projection, which is the key operation underlying the well-known ANOVA
decomposition. For a general function $g\in \calL_{2,\rho_d}$ the ANOVA
decomposition takes the form \cite{Sobol90}
\begin{equation}\label{anova}
 g \,=\, \sum_{\setu\subseteq \setD} g_\setu,
\end{equation}
where the sum is over all the $2^d$ subsets of $\setD:=\{1,\ldots,d\}$,
and each term $g_\setu$ depends only on the variables $x_k$ with
$k\in\setu$, and with the additional property that the projection operator
$P_k$ defined by (as in \eqref{firstPj})
\begin{equation*}
  (P_k g)(\bsx_{-k}) \,:=\, \int_{-\infty}^\infty g(x_k, \bsx_{-k})\,\rho(x_k)\,\rd x_k
\end{equation*}
annihilates all ANOVA terms $g_\setu$ with $k\in\setu$:
\begin{equation}\label{Pkprop}
 P_k g_\setu =0 \quad\mbox{for}\quad k\in\setu, \quad \mbox{whereas}\quad
 P_k g_\setu =g_\setu \quad\mbox{for}\quad k\notin\setu.
\end{equation}
The ANOVA terms can be written explicitly as \cite{KSWW10}
\[
  g_\setu \,=\, \sum_{\setv\subseteq\setu} (-1)^{|\setu|-|\setv|} \bigg(\prod_{k\notin\setv} P_k\bigg) g.
\]

It follows from \eqref{Pkprop}, since $I_d$ involves integration with
respect to every variable $x_k$ for $k\in\setD$, that
\[
 I_d g \,=\, g_\emptyset.
\]
Another consequence is that the ANOVA terms are orthogonal in $\calL_{2,
\rho_d}$,
\[
 \int_{\R^d} g_\setu(\bsx)\,g_\setv(\bsx)\,\rho_d(\bsx)\,\rd \bsx = 0
 \quad \mbox{for} \quad \setu\ne\setv.
\]
As a result, the variance of $g$ has the well known property that it is a
sum of the variances of the separate ANOVA terms,
\begin{equation} \label{eq:var}
  \sigma^2(g)
  \,=\, \sum_{\emptyset\ne \setu\subseteq{\setD}} \sigma^2(g_\setu),
\end{equation}
where
\[
  \sigma^2(g) \,:=\,
  \int_{\R^d} g^2(\bsx)\,\rho_d(\bsx)\,\rd \bsx - g_\emptyset^2
  \qquad\mbox{and}\qquad
  \sigma^2(g_\setu) \,=\, \int_{\R^d} g_\setu^2(\bsx)\, \rho_d(\bsx) \,\rd \bsx
  \quad \mbox{for}\quad \setu\ne\emptyset.
\]

With these preparations, we are now ready to make a simple observation
that preintegration is a variance reduction strategy for any general
$\calL_2$ function, not specific to our functions with kinks or jumps.
This explains why the preintegration strategy improves the performance of
MC methods.

\begin{lemma} \label{lem:var}
The projection $P_k$ reduces the variance of $g$ for all $g \in
\calL_{2.\rho_d}$ and all $k\in\setD$.
\end{lemma}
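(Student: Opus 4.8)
The plan is to show that $P_k$ can only remove ANOVA terms from the decomposition of $g$, never add new contributions to the variance. Concretely, I would start from the ANOVA decomposition \eqref{anova} of $g$ and apply $P_k$ termwise, using the annihilation property \eqref{Pkprop}: $P_k g_\setu = 0$ whenever $k \in \setu$, and $P_k g_\setu = g_\setu$ whenever $k \notin \setu$. Hence
\[
  P_k g \,=\, \sum_{\setu \subseteq \setD,\ k \notin \setu} g_\setu,
\]
which shows that the ANOVA expansion of $P_k g$ (as a function of the remaining $d-1$ variables, or trivially extended to $\R^d$) consists exactly of those $g_\setu$ with $k \notin \setu$, and in particular $(P_k g)_\emptyset = g_\emptyset$, so $P_k$ preserves the mean.

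Next I would invoke the variance formula \eqref{eq:var}. Since the surviving terms $\{g_\setu : k \notin \setu\}$ form a subset of the full collection $\{g_\setu : \emptyset \ne \setu \subseteq \setD\}$, and all the summands $\sigma^2(g_\setu)$ are nonnegative, we get
\[
  \sigma^2(P_k g)
  \,=\, \sum_{\emptyset \ne \setu \subseteq \setD,\ k \notin \setu} \sigma^2(g_\setu)
  \,\le\, \sum_{\emptyset \ne \setu \subseteq \setD} \sigma^2(g_\setu)
  \,=\, \sigma^2(g),
\]
with equality precisely when $\sigma^2(g_\setu) = 0$ for every $\setu$ containing $k$. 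This proves the claimed variance reduction.

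There is no serious obstacle here; the only point requiring a word of care is the bookkeeping that the ANOVA decomposition of $P_k g$ is genuinely the one written above — i.e.\ that the surviving $g_\setu$ still satisfy the defining orthogonality/annihilation properties with respect to the variables $x_m$, $m \ne k$. This follows immediately from the second half of \eqref{Pkprop} applied with indices $m \ne k$, since for $\setu \not\ni k$ the term $g_\setu$ already behaves correctly under every $P_m$. Once that is noted, the result is just the observation that dropping nonnegative terms from the sum \eqref{eq:var} cannot increase it.
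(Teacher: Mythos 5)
Your proof is correct and follows essentially the same route as the paper: apply $P_k$ to the ANOVA decomposition using \eqref{Pkprop} to see that $P_k g = \sum_{k\notin\setu} g_\setu$, then compare the resulting variance sum with \eqref{eq:var}. The extra remarks you add (that $P_k$ preserves the mean, the equality case, and the bookkeeping check that the surviving $g_\setu$ are indeed the ANOVA terms of $P_k g$) are sound but not materially different from the paper's argument.
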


\begin{proof}
For any $g\in\calL_{2,\rho_d}$ and any $k\in\setD$, it follows from
\eqref{anova} and \eqref{Pkprop} that
\[ 
  P_k g \,=\, \sum_{k\notin\setu\subseteq\setD} g_\setu,
\] 
that is, the operation $P_k$ applied to $g$ has the effect of annihilating
those ANOVA terms $g_\setu$ of $g$ with $k\in\setu$. As a result, the
ANOVA terms of the resulting function $P_kg$ are precisely the ANOVA terms
$g_\setu$ of $g$ for which $k\notin\setu$. Hence we have
\begin{equation} \label{eq:var-Pk}
 \sigma^2(P_k g)
 \,=\, \sum_{k\notin\setu\subseteq{\setD},\,\setu\ne\emptyset} \sigma^2(g_\setu).
\end{equation}
The result follows by comparing \eqref{eq:var-Pk} with \eqref{eq:var}.
\end{proof}

\section{Smoothing by preintegration} \label{sec:smooth}

In this section we first slightly generalize the mathematical setting from
\cite{GKS13}, providing some details on Sobolev spaces and weak
derivatives which are needed for the formulation of our main smoothing
theorems. Then we establish two new smoothing theorems for these Sobolev
spaces, extending \cite[Theorem~1]{GKSnote} from kink to jumps.

\subsection{Sobolev spaces with generalized weight functions} \label{sec:sob}

Following \cite[Section~2.2]{GKS13}, for $j\in\setD$ let $D_j$ denote the
partial derivative operator
\[
  (D_jg)(\bsx) \,=\, \frac{\partial g}{\partial x_j}(\bsx).
\]
Throughout this paper, the term \emph{multi-index} refers to a vector
$\bsalpha=(\alpha_1,\ldots,\alpha_d)$ whose components are nonnegative
integers, and we use the notation $|\bsalpha| = \alpha_1+\cdots+\alpha_d$
to denote the sum of its components. For any multi-index
$\bsalpha=(\alpha_1,\ldots,\alpha_d)$, we define
\begin{equation} \label{Dalpha}
  D^\bsalpha \,=\, \prod_{j=1}^d D_j^{\alpha_j}
  \,=\, \prod_{j=1}^d \left(\frac{\partial}{\partial x_j}\right)^{\alpha_j}
  \,=\, \frac{\partial^{|\bsalpha|}}{\prod_{j=1}^d \partial
  x_j^{\alpha_j}},
\end{equation}
and we say that the derivative $D^\bsalpha f$ is of order $|\bsalpha|$.

Let $\calC(\bbR^d) = \calC^0(\bbR^d)$ denote the linear space of
continuous functions defined on $\bbR^d$. For a nonnegative integer $r\ge
0$, we define $\calC^r(\bbR^d)$ to be the space of functions whose
classical derivatives of order $\le r$ are all continuous at every point
in $\bbR^d$, with no limitation on their behavior at infinity. For
example, the function $g(\bsx) = \exp(\sum_{j=1}^d x_j^2)$ belongs to
$\calC^r(\bbR^d)$ for all values of $r$. For convenience we write
$\calC^\infty(\bbR^d) = \cap_{r\ge 0} \calC^r(\bbR^d)$.

In addition to classical derivatives, we shall need also \emph{weak}
derivatives. By definition, the weak derivative $D^\bsalpha g$ is a
measurable function on $\bbR^d$ which satisfies
\begin{equation} \label{weak}
  \int_{\bbR^d} (D^\bsalpha g)(\bsx)\,v(\bsx)\,\rd\bsx
  \,=\, (-1)^{|\bsalpha|} \int_{\bbR^d} g(\bsx)\,(D^\bsalpha v)(\bsx)\,\rd\bsx
  \quad\mbox{for all}\quad
  v\in \calC^\infty_0(\bbR^d),
\end{equation}
where $\calC^\infty_0(\bbR^d)$ denotes the space of infinitely
differentiable functions with compact support in $\bbR^d$, and where the
derivatives on the right-hand side of \eqref{weak} are classical partial
derivatives. It can be shown, using the definition \eqref{weak}, that
$D_jD_k = D_kD_j$ for all $j,k\in\setD$, thus the ordering of the weak
first derivatives that make up $D^\bsalpha$ in \eqref{Dalpha} is
irrelevant.

If $g$ has classical continuous derivatives up to order $|\bsalpha|$, then
they satisfy \eqref{weak}, which in the classical sense is just the
integration by parts formula on $\bbR^d$.  Unless stated otherwise, the
derivatives in this paper are weak derivatives, which in principle allows
the possibility that they are defined only ``almost everywhere''. However,
a recurring theme is that our weak derivatives are shown to be continuous
(or strictly, can be represented by continuous functions), in which case
the weak derivatives are at the same time classical derivatives.

We now turn to the definition of the function spaces. For $p \in
[1,\infty]$, we first define weighted $\calL_p$ norms:
\[ 
  \|g\|_{\calL_{p,\widetilde\rho_d}} \,=\,
  \begin{cases}
  \left(\int_{\bbR^d} |g(\bsx)|^p\,\widetilde\rho_d(\bsx)\,\rd\bsx\right)^{1/p}
  & \mbox{if } p\in [1,\infty), \\
  \esup_{\bsx\in \bbR^d} |g(\bsx)| & \mbox{if } p = \infty
  \end{cases}
\] 
where $\widetilde\rho_d$ is a positive integrable function on $\R^d$.

When dealing with function spaces of derivatives of a function $g$, it
turns out to be convenient to allow flexibility in the choice of weight
function~$\widetilde\rho_d$. We therefore generalize the setting in
\cite{GKS13} and introduce a family $\zeta_{d,\bsalpha}$ of such weight
functions, one for each derivative $D^{\bsalpha}$, given by
\begin{equation}\label{zetaprod}
  \zeta_{d,\bsalpha}(\bsx)
  \,:=\, \prod_{k=1}^d \zeta_{\alpha_k}(x_k),
\end{equation}
where $\{\zeta_i\}_{i\ge 0}$ is a sequence of continuous integrable functions on
$\bbR$, satisfying
\begin{equation}\label{zbiggerrho}
 \rho(x) \,=\, \zeta_0(x) \,\le\, \zeta_1(x) \,\le\, \zeta_2(x) \,\le\, \cdots \qquad\mbox{for all}\quad x\in\R.
\end{equation}
The intuitive idea is that higher derivatives with respect to every
coordinate need to be limited in their growth towards infinity by making
$\zeta_{i}$ decay more slowly for larger order of derivatives $i$.

With these generalized weight functions $\zeta_{d,\bsalpha}$, denoted
collectively by $\bszeta$, we consider two kinds of Sobolev space: the
\emph{isotropic Sobolev space} with smoothness parameter $r\ge 0$, for $r$
a nonnegative integer,
\[ 
 \calW_{d,p,\bszeta}^r \,=\,
 \left\{ g \,:\, D^\bsalpha g \in \calL_{p,\zeta_{d,\bsalpha}}
 \quad\mbox{for all}\quad |\bsalpha| \le r \right\},
\] 
and the \emph{Sobolev space of dominating mixed smoothness} with
smoothness multi-index $\bsr=(r_1,\ldots,r_d)$,
\[ 
  \calW_{d,p,\bszeta,\mix}^\bsr \,=\,
  \left\{ g \,:\, D^\bsalpha g \in \calL_{p,\zeta_{d,\bsalpha}}
  \quad\mbox{for all}\quad \bsalpha\le\bsr \right\},
\] 
where $\bsalpha\le\bsr$ is to be understood componentwise, and the
derivatives are weak derivatives. For convenience we also write
$\calW_{d,p,\bszeta}^0 = \calL_{p,\rho_d}$ and $\calW_{d,p,\bszeta}^\infty
= \cap_{r\ge 0} \calW_{d,p,\bszeta}^r$. Analogously, we define
$\calC^{\bsr}_{\mix}(\bbR^d)$ to be the space of functions $g$ whose
classical derivatives $D^\bsalpha g$ with $\bsalpha\le \bsr$ are all
continuous at every point in $\bbR^d$, with no limitation on their
behavior at infinity.

The norms corresponding to the two kinds of Sobolev space can be defined,
for example, as in the classical sense, by
\[ 
 \|g\|_{\calW_{d,p,\bszeta}^r} =
  \Bigg(\sum_{|\bsalpha|\le r}
  \|D^\bsalpha g\|_{\calL_{p,\zeta_{d,\bsalpha}}}^2\Bigg)^{1/2}
  \quad\mbox{and}\quad
  \|g\|_{\calW_{d,p,\bszeta,\mix}^\bsr} =
  \Bigg(\sum_{\bsalpha\le \bsr}
  \|D^\bsalpha g\|_{\calL_{p,\zeta_{d,\bsalpha}}}^2\Bigg)^{1/2}.
\] 

We have the following relationships between the spaces:
\begin{itemize}
\item [(i)] $\calW_{d,p',\bszeta}^r \subseteq \calW_{d,p,\bszeta}^r$
    and $\calW_{d,p',\bszeta,\mix}^\bsr \subseteq
    \calW_{d,p,\bszeta,\mix}^\bsr$ for $1\le p\le p'\le \infty$.
\item [(ii)] $\calW_{d,p,\bszeta,\mix}^\bsr \subseteq
    \calW_{d,p,\bszeta}^r \iff \min_{j \in \setD} r_j \ge r \quad$ and
    $\quad \calW_{d,p,\bszeta}^r = \cap_{|\bsr|=r}
    \calW_{d,p,\bszeta,\mix}^\bsr$.
\item [(iii)] $\calW_{d,p,\bszeta,\mix}^{s,\ldots,s} \subseteq
    \calW_{d,p,\bszeta}^r \iff s\ge r \quad $ and  $\quad \calW_{d,p,\bszeta}^r
    \subseteq \calW_{d,p,\bszeta,\mix}^{s,\ldots,s} \iff r\ge s\,d$.
\item [(iv)] $\calW_{d,p,\bszeta}^r \subseteq \calC^k(\bbR^d)$ if
    $r>k+d/p$ (Sobolev embedding theorem).
\item [(v)] For $p\in [1,\infty)$ and $r\ge 1$, if $g \in
    \calW_{d,p,\bszeta}^r$ then $D^\bsalpha g\in
    \calW_{d,p,\bszeta}^{r-|\bsalpha|}$ for all $|\bsalpha| \le r$.
\item [(vi)] For $p\in [1,\infty)$ and $\bsr\ge \bsone$, if $g \in
    \calW_{d,p,\bszeta}^{\bsr}$ then $D^\bsalpha g\in
    \calW_{d,p,\bszeta}^{\bsr-\bsalpha}$ for all $\bsalpha \le \bsr$.
\end{itemize}
Properties (i)--(iv) are straightforward. Properties (v) and (vi) are a
bit more involved due to the varying generalized weight functions
considered here. Indeed, when $\overline\bsalpha$ is a multi-index
satisfying $0\le |\overline\bsalpha|\le r-|\bsalpha|$, we have
\begin{align*}
  \|D^{\overline\bsalpha} (D^\bsalpha g)\|_{\calL_{p,\zeta_{d,\overline\bsalpha}}}
  &\,=\,   \left(\int_{\bbR^d} |(D^{\overline\bsalpha} (D^\bsalpha g))(\bsx)|^p\,
  \zeta_{d,\overline\bsalpha}(\bsx)\,\rd\bsx\right)^{1/p} \\
  &\,\le\,   \left(\int_{\bbR^d} |(D^{\widehat\bsalpha} g)(\bsx)|^p\,
  \zeta_{d,\widehat\bsalpha}(\bsx)\,\rd\bsx\right)^{1/p}
  \,=\,  \|D^{\widehat\bsalpha}g\|_{\calL_{p,\zeta_{d,\widehat\bsalpha}}}
  \,<\,\infty,
\end{align*}
where we introduced $\widehat\bsalpha := \overline\bsalpha + \bsalpha$ and
we used
$\zeta_{d,\overline\bsalpha}(\bsx)\le\zeta_{d,\widehat\bsalpha}(\bsx)$
since $\overline\bsalpha\le\widehat\bsalpha$. The finiteness in the final
step follows from $g\in \calW^r_{d,p,\bszeta}$ and $|\widehat\bsalpha|\le
r$. This justifies (v). The argument can easily be modified to justify
(vi).

\subsection{New smoothing theorems} \label{sec:newthms}

In this subsection we establish two smoothing theorems: one for the
isotropic Sobolev space, the other for the mixed Sobolev space.  The
proofs are modeled on the proof of \cite[Theorem~1]{GKSnote}, but are
extended here to cover discontinuous integrands.

\begin{theorem}[Result for the isotropic Sobolev space with weight functions $\zeta_{d,\bsalpha}$] \label{thm:main1}
Let $d\ge 2$, $r\ge 1$, $p\in [1,\infty)$, and let
$\rho\in\calC^{r-1}(\bbR)$ be a strictly positive probability density
function. Let
\[ 
  f(\bsx) \,:=\, \theta(\bsx)\,{\rm ind}(\phi(\bsx)),
  \quad\mbox{where}\quad
  \theta,\phi\in\calW^r_{d,p,\bszeta}\cap\calC^r(\bbR^d),
\] 
with generalized weight functions $\zeta_{d,\bsalpha}$ satisfying
\eqref{zetaprod} and \eqref{zbiggerrho}, and with ${\rm ind}(\cdot)$
denoting the indicator function. Let $j\in\setD := \{1,\ldots,d\}$ be
fixed, and suppose that
\begin{equation} \label{phi}
  (D_j\phi)(\bsx) > 0
  \quad\forall\bsx\in \bbR^d,
  \qquad\mbox{and}\qquad
  \phi(\bsx) \to \infty \quad\mbox{as}\quad x_j\to \infty \quad\forall \bsx_{-j} \in \bbR^{d-1}.
\end{equation}
Writing $\bsy := \bsx_{-j}$ so that $\bsx = (x_j,\bsy)$, let
\[ 
  U_j \,:=\,
  \{
  \bsy\in\bbR^{d-1}
  \,:\,
  \phi(x_j,\bsy)=0
  \mbox{ for some }x_j\in\bbR
  \}
 \qquad\mbox{and}\qquad
 U_j^+ \,:=\, \bbR^{d-1}\setminus U_j.
\] 
If $U_j$ is empty, then $f = \theta$. If $U_j$ is not empty, then $U_j$ is
open, and there exists a unique function $\psi \equiv \psi_j\in
\calC^r(U_j)$ such that $\phi(x_j,\bsy)=0$ if and only if $x_j =
\psi(\bsy)$ for $\bsy\in U_j$. In the latter case we assume that every
function of the form
\begin{align} \label{h}
  \begin{cases}
  h(\bsy) \,=\,
  \displaystyle\frac{(D^{\bseta}\theta)(\psi(\bsy),\bsy)\,\prod_{i=1}^a
  [(D^{\bsgamma^{(i)}}\phi)(\psi(\bsy),\bsy)]}{[(D_j\phi)(\psi(\bsy),\bsy)]^b}\,
  \rho^{(c)}(\psi(\bsy)),
  \quad \bsy\in U_j,
  \vspace{0.2cm} \\
  \mbox{where $a,b,c$ are integers and $\bsgamma^{(i)}$, $\bseta$ are
  multi-indices with the constraints}
  \vspace{0.1cm} \\
 1\le a,b\le 2r-1,\quad
 1\le |\bsgamma^{(i)}|\le r,\quad
 0\le |\bseta|,c\le r-1, \quad
 1\le |\bsgamma^{(i)}| + |\bseta| + c \le r,
 \end{cases}
\end{align}
satisfies both
\begin{equation} \label{h1}
  h(\bsy)\to 0 \qquad
  \mbox{as}\quad \mbox{$\bsy$ approaches a boundary point of $U_j$ lying in $U_j^+$},
\end{equation}
and
\begin{equation}\label{h2}
  \int_{U_j} |h(\bsy)|^p\, \zeta_{d-1,\bsalpha_{-j}}(\bsy)\,\rd\bsy  \,<\, \infty,
  \quad  \text{ for all } |\bsalpha|\le r  \text{ with } \alpha_j=0,
\end{equation}
where $\bsalpha_{-j}$ denotes the multi-index with $d-1$ components
obtained from $\bsalpha$ by leaving out $\alpha_j$. Then
\[
  P_j f \in \calW^r_{d-1,p,\bszeta} \cap\calC^r(\bbR^{d-1}).
\]
\end{theorem}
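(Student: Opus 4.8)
The plan is to reduce the claim to an explicit, computable formula for the weak derivatives of $P_jf$ on the two pieces $U_j$ and $U_j^+$, and then to glue these together across the boundary. First I would establish the easy structural facts: by the implicit function theorem (Theorem~\ref{thm:implicit}) and \eqref{phi}, on the open set $U_j$ there is a unique $\psi\in\calC^r(U_j)$ with $\phi(\psi(\bsy),\bsy)=0$, and for $\bsy\in U_j$ the inner integrand $\theta(x_j,\bsy)\,{\rm ind}(\phi(x_j,\bsy))$ equals $\theta(x_j,\bsy)$ for $x_j>\psi(\bsy)$ and $0$ for $x_j<\psi(\bsy)$, whereas for $\bsy\in U_j^+$ it equals $\theta(x_j,\bsy)$ for all $x_j$ (since $\phi>0$ everywhere on that fibre). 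Hence
\[
  (P_jf)(\bsy) \,=\,
  \begin{cases}
  \displaystyle\int_{\psi(\bsy)}^\infty \theta(x_j,\bsy)\,\rho(x_j)\,\rd x_j, & \bsy\in U_j,\\[2mm]
  \displaystyle\int_{-\infty}^\infty \theta(x_j,\bsy)\,\rho(x_j)\,\rd x_j, & \bsy\in U_j^+.
  \end{cases}
\]
On $U_j^+$ this is a smooth parameter integral whose derivatives are obtained by differentiating under the integral sign, and membership in $\calW^r_{d-1,p,\bszeta}$ follows from $\theta\in\calW^r_{d,p,\bszeta}$ together with $\zeta_0=\rho$ and the monotonicity \eqref{zbiggerrho} (after a Minkowski/Fubini estimate bounding $\|D^{\bsalpha_{-j}}P_jf\|_{\calL_{p,\zeta_{d-1,\bsalpha_{-j}}}(U_j^+)}$ by $\|D^{\bsalpha}\theta\|_{\calL_{p,\zeta_{d,\bsalpha}}(\bbR^d)}$).

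The heart of the argument is the computation on $U_j$. I would differentiate the formula $(P_jf)(\bsy)=\int_{\psi(\bsy)}^\infty\theta(x_j,\bsy)\rho(x_j)\,\rd x_j$ repeatedly with respect to the $d-1$ variables $\bsy$. Each differentiation either hits the integrand — producing another term of the same type with $\theta$ replaced by a $\bsy$-derivative of $\theta$ — or hits the lower limit $\psi(\bsy)$, producing a boundary term $-\theta(\psi(\bsy),\bsy)\rho(\psi(\bsy))\,D_{\bsy}\psi(\bsy)$; and the derivatives of $\psi$ are themselves, by implicit differentiation of $\phi(\psi(\bsy),\bsy)=0$, rational expressions in derivatives of $\phi$ with a power of $D_j\phi$ in the denominator. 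Iterating and using the Leibniz and Faà di Bruno rules, one finds that any weak derivative $D^{\bsalpha_{-j}}(P_jf)$ on $U_j$ is a finite sum of a parameter-integral term (controlled exactly as on $U_j^+$) plus finitely many boundary terms, each of which is precisely a function $h$ of the form \eqref{h} with the stated constraints on $a,b,c,\bsgamma^{(i)},\bseta$ — the bookkeeping here is the point where the somewhat elaborate index ranges $1\le a,b\le 2r-1$, $1\le|\bsgamma^{(i)}|\le r$, $0\le|\bseta|,c\le r-1$, $1\le|\bsgamma^{(i)}|+|\bseta|+c\le r$ are forced. Then \eqref{h2} gives exactly the $\calL_{p,\zeta_{d-1,\bsalpha_{-j}}}$-integrability of these boundary terms over $U_j$, so $P_jf\in\calW^r_{d-1,p,\bszeta}$ once continuity is also verified.

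For the continuity claim $P_jf\in\calC^r(\bbR^{d-1})$ I would argue that the two one-sided expressions for $D^{\bsalpha_{-j}}(P_jf)$ agree in the limit at every boundary point: the parameter-integral piece is continuous across $\partial U_j$ by dominated convergence, and every boundary term $h$ tends to $0$ as $\bsy$ approaches a boundary point lying in $U_j^+$ by assumption \eqref{h1}, matching the value of the (purely parameter-integral) derivative coming from the $U_j^+$ side. At boundary points of $U_j$ that are limits of points of $U_j$ from both sides (the "folding" locus where $\psi\to\pm\infty$ or where the projection is merely complicated, as in the example \eqref{eq:example}) one uses instead that $\psi(\bsy)\to-\infty$ forces $\theta(\psi(\bsy),\bsy)\rho(\psi(\bsy))\to 0$ and likewise for the derivative terms — again via \eqref{h1} applied to the relevant $h$'s, since approaching such a point is the same as $\psi(\bsy)$ running to $-\infty$. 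Finally, a standard mollification/integration-by-parts argument (using that the candidate derivatives are continuous and that the one-sided formulas are classical on each open piece) upgrades these to genuine weak, hence classical, derivatives of $P_jf$ on all of $\bbR^{d-1}$.

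The main obstacle I anticipate is the combinatorial verification that repeated differentiation of $\int_{\psi(\bsy)}^\infty\theta\,\rho$, with $\psi$ only implicitly defined, yields exactly the family \eqref{h} and nothing worse — in particular controlling how the power $b$ of $D_j\phi$ in the denominator grows (it can reach $2r-1$) and confirming that the combined order $|\bsgamma^{(i)}|+|\bseta|+c$ never exceeds $r$, so that every boundary term is covered by the hypotheses \eqref{h1}--\eqref{h2}. A secondary technical point, handled by the $\zeta_i$ hierarchy \eqref{zbiggerrho}, is making sure the weight mismatch between $P_jf$ (which "inherits" $\rho=\zeta_0$ from the $x_j$-integration) and the target weights $\zeta_{d-1,\bsalpha_{-j}}$ goes the right way so that all the $\calL_p$ estimates close.
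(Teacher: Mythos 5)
Your proposal follows essentially the same strategy as the paper: use the implicit function theorem to define $\psi$ on the open set $U_j$, write $P_jf$ as an integral with lower limit $\psi(\bsy)$ on $U_j$ and as $P_j\theta$ on $U_j^+$, differentiate repeatedly to produce a parameter-integral term plus boundary terms of the form \eqref{h}, invoke \eqref{h1} together with $\psi(\bsy)\to-\infty$ at the boundary for continuity, and invoke \eqref{h2} for the $\calL_{p}$ estimate. One organizational difference is that the paper first subtracts $P_j\theta$ and works with $g := P_jf - P_j\theta$, which vanishes identically on $U_j^+$; you work with $P_jf$ directly. Both are workable, but the paper's reduction is what makes its gluing lemma directly applicable.

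The genuine gap in your write-up is the final gluing step. You dismiss it as ``a standard mollification/integration-by-parts argument,'' but this is precisely where the paper has to work hardest. The boundary $\partial U_j$ can be extremely complicated --- as the example \eqref{eq:example} shows, it need not be a nice hypersurface and need not have empty interior in any structured sense --- so integrating by parts over $U_j$ does not straightforwardly yield the weak-derivative identity, and mollification does not obviously commute with the piecewise definition in a way that closes the argument. What the paper actually proves is Lemma~\ref{lem:Crproperty}: if $g\in\calC^r(U)$, $g\equiv 0$ on $U^c$, and for every $|\bsalpha|\le r$ the derivative $D^\bsalpha g$ tends to $0$ along any sequence in $U$ converging to a point of $U^c$, then $g\in\calC^r(\R^k)$ with $D^\bsalpha g\equiv 0$ on $U^c$. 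The proof is an induction on the derivative order using a carefully chosen mean-value-theorem argument (introducing the threshold $\bar h$ to split cases according to whether $\bsy^\star + h\bse_i$ lands in $U$ or $U^c$), and this is what establishes existence of the classical derivative at every boundary point, regardless of the geometry of $\partial U_j$. Your plan needs either to prove this lemma or to replace it with an argument of comparable strength; as written, the ``standard'' gluing step is exactly the non-standard part. A secondary point: because the paper proves classical $\calC^r$ first (via this lemma), the weak derivatives come for free; your ordering (``weak, hence classical'') runs the implication in the harder direction and would require a separate justification.
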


\begin{proof}
We defer the proof of this theorem to Section~\ref{sec:proof1}.
\end{proof}

In effect, under the conditions in the theorem, the single integration
with respect to $x_j$ is sufficient to ensure that $P_jf$ inherits the
full smoothness of $\theta$ and $\phi$.

We remark that when $\theta = \phi$ we are back at the same function
$f(\bsx) = \max(\phi(\bsx,0))$ as considered in \cite[Theorem~1]{GKSnote}.
However, for this case we see that the new result is not as sharp as the
old one in the sense that the upper bounds on the values of $a$, $b$, $c$,
$|\bsgamma^{(i)}|$ in the condition \eqref{h} are larger than those in
\cite[Theorem~1]{GKSnote}. This is because the explicit prior knowledge of
$\theta = \phi$ means that we know a certain term vanishes (precisely, the
second term on the right-hand side of \cite[Formula~(11)]{GKSnote}). This
observation also indirectly explains how the new result for jumps require
stronger conditions on the functions $\theta$, $\phi$ and $\rho$ than the
corresponding result for kinks.

The conditions \eqref{h1} and \eqref{h2} in the theorem are difficult to
verify directly because the function $h$ depends explicitly on the inverse
function $\psi(\bsy)$.  Fortunately, a sufficient condition for \eqref{h1}
to hold is that
\begin{align} \label{suff_new}
  &\left|\frac{(D^{\bseta}\theta)(x_j,\bsy)\,\prod_{i=1}^a
  [(D^{\bsgamma^{(i)}}\phi)(x_j,\bsy)]}{[(D_j\phi)(x_j,\bsy)]^b}\,
  \rho^{(c)}(x_j)\right|
  \,\le\, E_1(x_j) E_2(\bsy),
\end{align}
where $E_1,E_2$ are positive functions satisfying
\begin{itemize}
 \item $E_1$ is bounded and $E_1(x_j)\to 0$ as $x_j\to - \infty$,
 \item $E_2$ is locally bounded (bounded over compact sets) and
     $\int_{\bbR^{d-1}} \left|E_2(\bsy) \right|^p
  \zeta_{d-1,\bsalpha_{-j}}(\bsy)\,\rd\bsy \,<\, \infty$ for all
  $|\bsalpha|\le r$.
\end{itemize}
Considering a point $\bsy^\star$ on the boundary $\varGamma(U_j) \subset
U_j^+$, and a sequence $(\bsy_n)_{n \in \mathbb{N}} \subset U_j $ such
that $\bsy_n \to \bsy^\star$, we see that $E_2(\bsy_n)$ is bounded and
$E_1(\psi(\bsy_n)) \to 0$ since $\psi(\bsy_n) \to -\infty$. Therefore
\eqref{suff_new} is sufficient for \eqref{h1}. Moreover, for
$|\bsalpha|\le r$ we have
\begin{align*}
&\int_{U_j}
\left | \displaystyle\frac{(D^{\bseta}\theta)(\psi(\bsy),\bsy)\,\prod_{i=1}^a
  [(D^{\bsgamma^{(i)}}\phi)(\psi(\bsy),\bsy)]}{[(D_j\phi)(\psi(\bsy),\bsy)]^b}\,
  \rho^{(c)}(\psi(\bsy)) \right |^p \zeta_{d-1,\bsalpha_{-j}}(\bsy)\,\rd\bsy \\
  &\,\le\, \int_{U_j} \left|E_1(\psi(\bsy)) \right|^p \left|E_2(\bsy) \right|^p
  \zeta_{d-1,\bsalpha_{-j}}(\bsy)\,\rd\bsy
  \,\le\, B \int_{U_j} \left|E_2(\bsy) \right|^p
  \zeta_{d-1,\bsalpha_{-j}}(\bsy)\,\rd\bsy
  \,<\, \infty,
  \end{align*}
for some positive constant $B$. Therefore \eqref{suff_new} is also
sufficient for \eqref{h2}.

We can also deduce a result for Sobolev spaces of dominating mixed
smoothness.

\begin{theorem}[Result for the Sobolev space of dominating mixed smoothness] \label{thm:main2}
Let $d\ge 2$, $p\in [1,\infty)$, $j\in \setD$, and let
$\rho\in\calC^{r-1}(\bbR)$ be a strictly positive probability density
function. Let $\bsr = (r_1,\ldots,r_d)$ be a multi-index satisfying
\[
  r_j \,\ge\, \textstyle\sum_{1\le k\le d,\,k\ne j} r_k \,\ge\, 1.
\]
If we replace the conditions on $\theta$, $\phi$ in
Theorem~\ref{thm:main1} by
\[
  \theta,\phi\in \calW^{\bsr}_{d,p,\bszeta,\mix}\cap\calC^\bsr_{\mix}(\bbR^d),
\]
and further restrict \eqref{h}--\eqref{h2} to functions $h$ with
multi-indices $\bsgamma^{(i)} \le\bsr$, $\bseta < \bsr$, and
$\bsalpha\le\bsr$, then the conclusion becomes:
\[
  P_j f \in
  \calW^{\bsr_{-j}}_{d-1,p,\bszeta}\cap\calC^{\bsr_{-j}}_{\bmix}(\bbR^{d-1}).
\]
\end{theorem}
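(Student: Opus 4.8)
The plan is to follow the proof of Theorem~\ref{thm:main1} essentially verbatim, keeping track of which derivatives of $P_jf$ appear, and verifying that the combinatorial structure of those derivatives is compatible with the mixed-smoothness bookkeeping rather than the isotropic one. First I would recall (from the deferred proof in Section~\ref{sec:proof1}, which I am allowed to assume) the key formula: after splitting $P_jf(\bsy) = \int_{\psi(\bsy)}^{\infty}\theta(x_j,\bsy)\rho(x_j)\,\rd x_j$ on $U_j$ (and $P_jf=\theta$-type expression on $U_j^+$), one differentiates under the integral sign and uses the implicit function theorem identity $D_k\psi(\bsy) = -(D_k\phi)(\psi(\bsy),\bsy)/(D_j\phi)(\psi(\bsy),\bsy)$ for $k\ne j$. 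Repeated differentiation with respect to the variables $\bsy=\bsx_{-j}$ produces, by Fa\`a di Bruno / Leibniz, a finite sum of two kinds of terms: (a) integral terms $\int_{\psi(\bsy)}^{\infty}(D^{\bsbeta}\theta)(x_j,\bsy)\,\rho(x_j)\,\rd x_j$ with $\bsbeta$ supported off the $j$-th coordinate, and (b) boundary terms evaluated at $x_j=\psi(\bsy)$, each of exactly the shape $h(\bsy)$ in \eqref{h}. The main content is to check, for the mixed-smoothness target $\bsr_{-j}$, that (i) every boundary term $h$ that arises has multi-indices obeying the \emph{restricted} constraints $\bsgamma^{(i)}\le\bsr$, $\bseta<\bsr$, $\bsalpha\le\bsr$, and (ii) the integral terms of type (a) lie in the required spaces.

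For step (i): a mixed derivative $D^{\bsalpha_{-j}}P_jf$ with $\bsalpha_{-j}\le\bsr_{-j}$ differentiates at most $r_k$ times in each coordinate $x_k$, $k\ne j$. Each such differentiation either hits a $\theta$-, $\phi$-, or $\rho$-factor (raising an $\bseta$-, $\bsgamma^{(i)}$-, or $c$-index by one unit $\bse_k$) or spawns a new $D_k\psi$ factor (which, via the IFT identity, contributes a fresh ratio $(D_k\phi)/(D_j\phi)$, i.e.\ raises $a$ and $b$ and introduces a $\bsgamma$ with $|\bsgamma|=1$). The crucial point — exactly as in the isotropic case where $1\le|\bsgamma^{(i)}|+|\bseta|+c\le r$ — is that the \emph{total} number of differentiations distributed among all the $\theta$, $\phi$, $\rho$ factors in a given monomial equals $|\bsalpha_{-j}|$, and since each $\bsgamma^{(i)}$, $\bseta$, and the $c$-count each receive derivatives only in coordinates $k$ where $\bsalpha_{-j}$ (hence $\bsr$) allows them, we get componentwise $\bsgamma^{(i)}\le\bsr$ (in fact $\bsgamma^{(i)}$ is supported off coordinate $j$ with $\sum_{k\ne j}\gamma^{(i)}_k\le\sum_{k\ne j}r_k\le r_j$, and one uses $r_j\ge\sum_{k\ne j}r_k$ to see $|\bsgamma^{(i)}|\le r_j\le$ the relevant isotropic bound $r$), $\bseta<\bsr$ (strict because $\eta_j=0<r_j$ and the hypothesis forces $r_j\ge1$), and $c\le\min r_k$ as needed; the bounds on $a,b$ follow as before since each new $\psi$-derivative adds one to each. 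The differentiation-count identity $\theta\mapsto|\bseta|$, $\sum|\bsgamma^{(i)}|$, $c$ summing appropriately is the same bookkeeping lemma as in Theorem~\ref{thm:main1}, now applied coordinatewise.

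For step (ii): the integral term $\int_{\psi(\bsy)}^{\infty}(D^{\bseta}\theta)(x_j,\bsy)\rho(x_j)\,\rd x_j$ with $\bseta\le\bsr_{-j}$ (so $\bseta\le\bsr$) is handled exactly as in Theorem~\ref{thm:main1}: its own $x_j$-derivative is a boundary $h$-term (covered by (i)), and its membership in $\calL_{p,\zeta}$ with the correct weight follows from $\theta\in\calW^{\bsr}_{d,p,\bszeta,\mix}$ by the same Jensen/Minkowski-in-the-integral estimate used there, because $\bseta\le\bsr$ means $D^{\bseta}\theta$ has the integrability needed, and the weight inequality \eqref{zbiggerrho} lets one bound $\zeta_{d-1,\bsbeta_{-j}}$ by $\zeta$'s associated with the full multi-index. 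Continuity of the claimed derivatives follows from continuity of all the ingredients ($\theta,\phi\in\calC^{\bsr}_{\mix}$, $\rho\in\calC^{r-1}$, $\psi\in\calC^{\bsr_{-j}}_{\mix}(U_j)$ by the smooth implicit function theorem applied coordinatewise, and $\psi$-continuity up to $U_j^+$-boundary is not needed since \eqref{h1} forces the boundary terms to vanish there), together with the dominated-convergence argument of the isotropic proof; the vanishing condition \eqref{h1} guarantees that $D^{\bsalpha_{-j}}P_jf$ extends continuously across $\varGamma(U_j)$, matching the $U_j^+$-side derivatives.

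**Main obstacle.** The routine-looking but genuinely delicate step is the coordinatewise bookkeeping in step (i): one must verify that \emph{every} monomial produced by iterated Leibniz/Fa\`a-di-Bruno differentiation of the IFT-expanded formula has $\bsgamma^{(i)}$, $\bseta$, $c$ bounded componentwise by $\bsr$ (not merely in total degree), and that the numerator $\phi$-factors never demand a derivative of order exceeding what $\calC^{\bsr}_{\mix}$ provides in a given direction. This is where the hypothesis $r_j\ge\sum_{k\ne j}r_k\ge1$ is essential: it ensures the single integrated variable $x_j$ carries enough smoothness to absorb all the $(D_j\phi)^{-b}$ and high-$x_j$-order $\phi$-factors that accumulate when one differentiates many times in the $\bsy$ variables. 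I expect the cleanest route is to prove this monomial-structure claim once as an auxiliary lemma (parallel to the one implicit in the proof of Theorem~\ref{thm:main1}, but stated with componentwise rather than total-degree bounds), then deduce both theorems from it.
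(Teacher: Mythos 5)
Your high-level plan matches the paper's: the paper's own proof of Theorem~\ref{thm:main2} is essentially one paragraph saying that the argument is a minor modification of the proof of Theorem~\ref{thm:main1}, with the hypothesis $r_j\ge\sum_{k\ne j}r_k$ entering precisely because the mixed $\bsy$-derivatives spawn $j$-derivatives of $\phi$ and $\theta$ of order up to $|\bsalpha|$ and $|\bsalpha|-1$ respectively. So the route you propose is the correct one.

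However, your step~(i) contains a genuine error that contradicts your own ``main obstacle'' remark. You write that each $\bsgamma^{(i)}$, $\bseta$ and $c$ ``receive derivatives only in coordinates $k$ where $\bsalpha_{-j}$ \dots\ allows them'' and in particular that ``$\bsgamma^{(i)}$ is supported off coordinate $j$'' with $\eta_j=0$. This is false, and if it were true the hypothesis $r_j\ge\sum_{k\ne j}r_k$ would be unnecessary. The chain rule \eqref{chainrule} is
\[
  D_\ell\bigl(\xi(\psi(\bsy),\bsy)\bigr)
  = (D_\ell\xi)(\psi(\bsy),\bsy) + (D_j\xi)(\psi(\bsy),\bsy)\,(D_\ell\psi)(\bsy),
\]
and it is the second summand that, at every differentiation step in a $\bsy$-direction, increments the $j$-component of the relevant multi-index on $\phi$ or $\theta$. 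One already sees $D_jD_k\phi$ and $D_jD_j\phi$ in \eqref{dldkpjf} after just two $\bsy$-derivatives. After $|\bsalpha|$ differentiations with $\alpha_j=0$ one can accumulate $\gamma^{(i)}_j$ as large as $|\bsalpha|$ (and $\eta_j$ as large as $|\bsalpha|-1$), which is exactly why the paper imposes $r_j\ge\sum_{k\ne j}r_k\ge|\bsalpha|$. So the correct coordinatewise bookkeeping is: the $k\ne j$ components of $\bsgamma^{(i)},\bseta$ are bounded by $\alpha_k\le r_k$, while the $j$-component is bounded by $|\bsalpha|\le\sum_{k\ne j}r_k\le r_j$ (respectively $|\bsalpha|-1<r_j$ for $\bseta$), giving $\bsgamma^{(i)}\le\bsr$ and $\bseta<\bsr$ as claimed in the theorem. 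Your final paragraph in fact states the right intuition; you should simply delete the ``supported off coordinate $j$'' claim and replace it with this coordinatewise count, after which the remainder of your argument (the $\calL_p$-estimate for the integral terms via the Inheritance Theorem, and continuity across $\varGamma(U_j)$ via Lemma~\ref{lem:Crproperty} and \eqref{h1}) goes through unchanged.
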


\begin{proof}
The proof is obtained from minor modifications of the proof of
Theorem~\ref{thm:main1} in Section~\ref{sec:proof1}. In particular, the
requirement that $r_j$ is greater than or equal to the sum of the
remaining $r_k$ for $k\ne j$ is needed because, for any multi-index
$\bsalpha \le \bsr$ with $\alpha_j = 0$, it is clear from
\eqref{chainrule} and a generalization of \eqref{dldkpjf} that the
expression for $D^\bsalpha P_jf$ includes some terms that depend on
$D_j^{|\bsalpha|}\phi$ and some terms that depend on
$D_j^{|\bsalpha|-1}\theta$.
\end{proof}

\section{Applying the theory to option pricing} \label{sec:apply}

We now apply our results to the option pricing example.

Recall from Section~\ref{sec:app} that after PCA factorization the
function $f$ from the digital option pricing example takes the form
\eqref{problem2}, with $\theta$ a constant function and $\phi$ given by
\eqref{phi_explicit}. It follows that
\[
  (D_j \phi)(\bsx) \,=\,
  \frac{\sigma\,S_0}{d} \sum_{\ell=1}^d
  \exp\Bigg( \left(\mu-\tfrac{\sigma^2}{2}\right) \ell\Delta t
  +\sigma \sum_{k=1}^d A_{\ell k}\, x_k\Bigg) A_{\ell j}.
\]
In particular, we see that $(D_1 \phi)(\bsx)>0$ because, as explained in
Section~\ref{sec:app}, $A_{\ell 1}>0$ for all $\ell$, thus in this case it
is appropriate to take $j=1$ in Theorem~\ref{thm:main1}.  It is also clear
that $\phi$ is in $\calC^r(\R^d)$ for all $r\in\mathbb{Z}^+$.
Additionally, we may take all the weight functions $\zeta_i$ in
\eqref{zbiggerrho} equal to the standard normal density $\rho$.  It is
then clear that the sufficient condition \eqref{suff_new} is satisfied,
and moreover that all the integrability and decay conditions in Theorem
\ref{thm:main1} are satisfied, because all derivatives of $\phi$ are
``killed'' at infinity by the Gaussian weight functions and their
derivatives.  It then follows from Theorem \ref{thm:main1} that
\[
P_1f\in \calW_{d-1,p,\boldsymbol{\rho}}^r\cap \calC^r(\R^{d-1})\quad
\forall \; r\in\mathbb{Z}^+, \quad \forall \; p\in[1,\infty).
\]

\section{Proof of the main smoothing theorem} \label{sec:proof1}

Before we proceed to prove Theorem~\ref{thm:main1}, we quote three
theorems from \cite[Section~2.4]{GKS13}, but state them with respect to
the Sobolev spaces defined with generalized weight functions
$\zeta_{d,\bsalpha}$. We outline the subtle additional steps needed in the
proofs of \cite{GKS13} to allow for this generalization.

The classical Leibniz theorem allows us to swap the order of
differentiation and integration. In this paper we need a more general form
of the Leibniz theorem as given below.

\begin{theorem}[The Leibniz Theorem {\cite[Theorem~2.1]{GKS13}}] \label{thm:swap}
Let $p\in [1,\infty)$. For $g \in \calW_{d,p,\bszeta}^1$ with generalized
weight functions $\zeta_{d,\bsalpha}$ satisfying \eqref{zetaprod} and
\eqref{zbiggerrho}, we have
\[
  D_k P_j g \,=\, P_j D_k g
  \qquad\mbox{for all}\quad j,k \in \setD \quad\mbox{with}\quad j \ne k.
\]
\end{theorem}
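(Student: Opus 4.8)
The plan is to prove the Leibniz Theorem (Theorem~\ref{thm:swap}) by verifying the defining identity \eqref{weak} for the weak derivative: we must show that for every test function $v\in\calC_0^\infty(\bbR^{d-1})$,
\[
  \int_{\bbR^{d-1}} (P_j D_k g)(\bsx_{-j})\,v(\bsx_{-j})\,\rd\bsx_{-j}
  \,=\, -\int_{\bbR^{d-1}} (P_j g)(\bsx_{-j})\,(D_k v)(\bsx_{-j})\,\rd\bsx_{-j},
\]
which says precisely that $D_k(P_j g)=P_j D_k g$ in the weak sense on $\bbR^{d-1}$ (with $k\ne j$). First I would expand the left-hand side using the definition \eqref{firstPj} of $P_j$ and Fubini's theorem to rewrite it as a full $\bbR^d$ integral of $(D_k g)(\bsx)\,\rho(x_j)\,\widetilde v(\bsx)$, where $\widetilde v(\bsx):=v(\bsx_{-j})$; note $\widetilde v$ is smooth on $\bbR^d$ but not compactly supported in the $x_j$-direction, so one cannot apply \eqref{weak} directly. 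The standard remedy is to introduce a cutoff $\chi_R(x_j)$ that is $1$ on $[-R,R]$, supported in $[-R-1,R+1]$, with bounded derivatives uniformly in $R$, so that $\rho(x_j)\,\chi_R(x_j)\,\widetilde v(\bsx)\in\calC_0^\infty(\bbR^d)$; apply the weak-derivative identity \eqref{weak} for $D_k$ with this legitimate test function, then pass to the limit $R\to\infty$.

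The key steps, in order, are: (1) justify the Fubini interchange and the whole chain of manipulations using integrability of $g$ and $D_k g$ against $\rho_d$, which follows from $g\in\calW^1_{d,p,\bszeta}$ together with $p\ge1$ and \eqref{zbiggerrho} (so $\zeta_{d,\bsalpha}\ge\rho_d$ guarantees $\calL_{p,\zeta_{d,\bsalpha}}\subseteq\calL_{p,\rho_d}\subseteq\calL_{1,\rho_d}$ on the relevant marginals, after localizing in $\bsx_{-j}$ to the compact support of $v$); (2) for fixed $R$, since $D_k$ does not differentiate the $x_j$ variable, the identity \eqref{weak} applied to the product test function $\rho(x_j)\chi_R(x_j)\widetilde v(\bsx)$ moves the derivative off $g$ and only hits $\widetilde v$ in the $x_k$-slot (the $\rho\chi_R$ factor is untouched because $k\ne j$), yielding
\[
  \int_{\bbR^d} (D_k g)(\bsx)\,\rho(x_j)\,\chi_R(x_j)\,\widetilde v(\bsx)\,\rd\bsx
  \,=\, -\int_{\bbR^d} g(\bsx)\,\rho(x_j)\,\chi_R(x_j)\,(D_k\widetilde v)(\bsx)\,\rd\bsx;
\]
(3) let $R\to\infty$ on both sides by dominated convergence, the dominating functions being $|D_k g(\bsx)|\,\rho(x_j)\,\|\widetilde v\|_\infty$ and $|g(\bsx)|\,\rho(x_j)\,\|D_k\widetilde v\|_\infty$ restricted to $\bbR\times(\mathrm{supp}\,v)$, both integrable by step (1); (4) recognize the two limiting integrals, after undoing Fubini, as $\int P_j D_k g\cdot v$ and $-\int P_j g\cdot D_k v$ respectively, which is the desired identity. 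A final remark is that one should separately check $P_j g$ is measurable and locally integrable on $\bbR^{d-1}$, which again is immediate from Fubini and the integrability bounds, and also verify membership $D_k P_j g = P_j D_k g\in\calL_{p,\zeta_{d-1,\bsalpha_{-j}}}$ of the relevant marginal space via Jensen/Minkowski, though the bare statement as quoted only asserts the pointwise-a.e.\ identity of weak derivatives.

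The main obstacle is the non-compact support of the natural test function $\widetilde v$ in the $x_j$-direction, i.e.\ making rigorous the passage from the genuine $(d-1)$-dimensional test function $v$ to a legitimate $d$-dimensional one; the cutoff-and-limit argument handles this cleanly provided the decay of $\rho$ (and the $\calL_p$ control on $g$, $D_kg$ against $\rho_d$ coming from \eqref{zbiggerrho}) is strong enough to supply an $R$-independent integrable dominating function. This is exactly the ``subtle additional step'' alluded to in the text: in \cite{GKS13} the weight was $\rho_d$ itself, whereas here one must observe that $\rho\le\zeta_0\le\zeta_i$ forces all the generalized weights to dominate $\rho_d$ on each coordinate, so the argument carries over verbatim once that monotonicity is invoked. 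The remaining verifications (Fubini, dominated convergence, measurability) are routine.
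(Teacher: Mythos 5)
Your overall strategy mirrors the paper's: verify the weak-derivative identity \eqref{weak} on $\bbR^{d-1}$ by Fubini, move the $D_k$ onto the test function via the weak-derivative identity on $\bbR^d$, and justify the interchange by integrability coming from $g\in\calW^1_{d,p,\bszeta}$ combined with $\rho_d\le\zeta_{d,\bsalpha}$ (from $\zeta_0=\rho\le\zeta_i$) and compactness of $\mathrm{supp}(v)$. The paper compresses this by citing the proof of \cite[Theorem~2.1]{GKS13} and only spelling out the Fubini bound; you make the argument self-contained, which is fine.

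There is, however, a gap at your step (2). You call $\rho(x_j)\chi_R(x_j)\widetilde v(\bsx)$ a ``legitimate test function'' and apply \eqref{weak} to it, but \eqref{weak} is stated only for $v\in\calC_0^\infty(\bbR^d)$, and the density $\rho$ is \emph{not} assumed to be $\calC^\infty$ anywhere in the paper (it is merely continuous, and in the main theorems at most $\calC^{r-1}$). Hence $\rho\chi_R\widetilde v$ need not lie in $\calC_0^\infty(\bbR^d)$; it fails to be infinitely differentiable in the $x_j$ variable. The cutoff $\chi_R$ handles the lack of compact support in $x_j$, but not the lack of smoothness in $x_j$. The fix is routine but must be said: mollify the $x_j$-factor, i.e.\ replace $\rho\chi_R$ by $(\rho\chi_R)*\eta_\varepsilon$ for a one-dimensional mollifier $\eta_\varepsilon$, so that the product is genuinely in $\calC_0^\infty(\bbR^d)$; apply \eqref{weak}; note that since $k\ne j$ the $D_k$-derivative never touches the mollified $x_j$-factor, so both sides converge as $\varepsilon\to0$ by dominated convergence (the dominants are the same ones you already use). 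Alternatively, one can record once and for all that for $W^{1,1}_{\mathrm{loc}}$ functions the integration-by-parts identity in the $x_k$-direction extends to test functions that are merely continuous and compactly supported in $x_j$ and $\calC_0^\infty$ in the remaining variables. Without one of these observations, the appeal to \eqref{weak} is not justified as written. The remaining steps (measurability of $P_jg$, the two Fubini interchanges, the dominated-convergence limit $R\to\infty$) are correctly handled and match the paper's concerns.
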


\begin{proof}
We follow the proof of \cite[Theorem~2.1]{GKS13} to the last paragraph
where Fubini's theorem was applied a second time. This application of Fubini's Theorem is valid
because
\begin{align*}
  &\left|\int_{\bbR^d} \int_{-\infty}^\infty (D_kg)(t_j,\bsx_{-j})\,\rho(t_j)\,\rd t_j\, v(\bsx)\,\rd\bsx\right| \\
  &\,\le\, \int_{\bbR^{d-1}} \int_{-\infty}^\infty
   |(D_kg)(t_j,\bsx_{-j})|\,\zeta_1(t_j)\,\rd t_j\,\rho_{d-1}(\bsx_{-j})
  \frac{\int_{-\infty}^\infty|v(x_j,\bsx_{-j})|\,\rd x_j}{\rho_{d-1}(\bsx_{-j})}\, \,\rd\bsx_{-j} \\
  &\,\le\, \| D_kg \|_{\calL_{1,\zeta_{d,\bse_k}}}
  \frac{\sup_{\bsx_{-j}\in V}\int_{-\infty}^\infty|v(x_j,\bsx_{-j})|\,\rd x_j}
       {\inf_{\bsx_{-j}\in V}\rho_{d-1}(\bsx_{-j})}
  \,<\, \infty,
\end{align*}
where $\bse_k$ is the multi-index consisting of $1$ in the position $k$,
and $0$ elsewhere, and where we made use of $\rho(t_j)\le \zeta_1(t_j)$
and $g\in \calW^1_{d,1,\bszeta}$, and that the set $V$ defined in the
proof of \cite[Theorem~2.1]{GKS13} is a compact set because of the
compactness of ${\rm supp}(v)$. The remainder of that proof then stands.
\end{proof}

The next theorem is an application of the Leibniz theorem; it establishes
that $P_jf$ inherits the smoothness of $g$.

\begin{theorem}[The Inheritance Theorem {\cite[Theorem~2.2]{GKS13}}] \label{thm:inher}
Let $r\ge 0$ and $p\in [1,\infty)$. For $g \in \calW_{d,p,\bszeta}^r$ with
generalized weight functions $\zeta_{d,\bsalpha}$ satisfying
\eqref{zetaprod} and \eqref{zbiggerrho}, we have
\[
  P_j g \,\in\, \calW_{{d-1},p,\bszeta}^r
  \qquad\mbox{for all}\quad j \in \setD.
\]
\end{theorem}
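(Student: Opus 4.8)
The plan is to reduce everything to the Leibniz Theorem~\ref{thm:swap} together with a Jensen-type contraction estimate, treating each weak derivative $D^\bsalpha P_j g$ with $|\bsalpha|\le r$ separately. Throughout I identify a multi-index $\bsalpha$ on the $d-1$ variables $\bsx_{-j}$ with the multi-index on $\bbR^d$ having $j$-th component equal to $0$; then by \eqref{zetaprod} and \eqref{zbiggerrho} one has the factorisation
\[
  \zeta_{d,\bsalpha}(\bsx) \,=\, \zeta_0(x_j)\,\zeta_{d-1,\bsalpha}(\bsx_{-j}) \,=\, \rho(x_j)\,\zeta_{d-1,\bsalpha}(\bsx_{-j}),
\]
which is exactly what makes $P_j$ a contraction in the relevant weighted norm. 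Indeed, for any measurable $G$ on $\bbR^d$ and any positive integrable $w$ on $\bbR^{d-1}$, Jensen's inequality applied to the probability measure $\rho(x_j)\,\rd x_j$ gives $|(P_jG)(\bsx_{-j})|^p \le \int_\bbR |G(x_j,\bsx_{-j})|^p\,\rho(x_j)\,\rd x_j$, and integrating against $w(\bsx_{-j})\,\rd\bsx_{-j}$ and using Tonelli yields $\|P_jG\|_{\calL_{p,w}} \le \|G\|_{\calL_{p,\rho\cdot w}}$. Taking $w=\rho_{d-1}$ settles the case $r=0$ (and the $|\bsalpha|=0$ part of the general case).

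For $1\le|\bsalpha|=m\le r$, write $D^\bsalpha = D_{k_1}\cdots D_{k_m}$ with each $k_i\in\setD\setminus\{j\}$ (repetitions allowed), and peel the derivatives off from the inside, applying Theorem~\ref{thm:swap} repeatedly: $D_{k_m}P_jg = P_jD_{k_m}g$, then $D_{k_{m-1}}(P_jD_{k_m}g) = P_jD_{k_{m-1}}D_{k_m}g$, and so on, so that after $m$ steps
\[
  D^\bsalpha P_j g \,=\, P_j D^\bsalpha g
\]
as an identity of weak derivatives. The point needing care is that each swap is licensed by Theorem~\ref{thm:swap} only when the function being preintegrated lies in $\calW^1_{d,p,\bszeta}$: at the step peeling off the $(i-1)$-th innermost derivative, that function is a derivative of $g$ of order $i-1\le m-1\le r-1$, which by property~(v) belongs to $\calW^{r-(i-1)}_{d,p,\bszeta}\subseteq\calW^1_{d,p,\bszeta}$ since $r-(i-1)\ge r-(m-1)\ge 1$. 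Thus all $m$ swaps are valid.

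Finally, combining the two ingredients: since $g\in\calW^r_{d,p,\bszeta}$ and $|\bsalpha|\le r$, we have $D^\bsalpha g\in\calL_{p,\zeta_{d,\bsalpha}}=\calL_{p,\rho\cdot\zeta_{d-1,\bsalpha}}$, so the contraction estimate gives
\[
  \|D^\bsalpha P_jg\|_{\calL_{p,\zeta_{d-1,\bsalpha}}} \,=\, \|P_jD^\bsalpha g\|_{\calL_{p,\zeta_{d-1,\bsalpha}}} \,\le\, \|D^\bsalpha g\|_{\calL_{p,\zeta_{d,\bsalpha}}} \,<\, \infty .
\]
Since this holds for every admissible $\bsalpha$ with $|\bsalpha|\le r$, we conclude $P_jg\in\calW^r_{d-1,p,\bszeta}$, as claimed. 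I expect the only genuinely delicate point to be the bookkeeping in the iterated Leibniz swaps — verifying that each partially differentiated function still has enough smoothness (via property~(v)) to invoke Theorem~\ref{thm:swap}; the rest is the Jensen contraction plus the weight factorisation forced by $\alpha_j=0$ and $\zeta_0=\rho$.
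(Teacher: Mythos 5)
Your proposal is correct and takes essentially the same route as the paper: the $r=0$ case is the Jensen/H\"older contraction exploiting $\zeta_{\alpha_j}=\zeta_0=\rho$, and the $r\ge1$ case iterates Theorem~\ref{thm:swap} to get $D^\bsalpha P_jg = P_jD^\bsalpha g$, with each swap licensed by property~(v) exactly as you argue, followed by the same contraction bound. The only cosmetic difference is the direction you peel the factors of $D^\bsalpha$ off; since weak derivatives commute this is immaterial, and your inside-out peeling is if anything the cleaner bookkeeping.
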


\begin{proof}
For the case $r=0$ the proof is exactly the same as the proof of
\cite[Theorem~2.2]{GKS13}. Consider now $r\ge 1$. Let $j\in\setD$ and let
$\bsalpha$ be any multi-index with $|\bsalpha|\le r$ and $\alpha_j = 0$.
Since now $g\in \calW_{d,p,\bszeta}^r$ with generalized weight functions,
we have $\|D^\bsalpha g\|_{\calL_{p,\zeta_{d,\bsalpha}}}<\infty$. To show
that $P_jg \in \calW_{d-1,p,\bszeta}^r$ we need to show that $\|D^\bsalpha
P_j g\|_{\calL_{p,\zeta_{d-1,\bsalpha_{-j}}}}<\infty$. Mimiking the proof
of \cite[Theorem~2.2]{GKS13}, we write successively
\begin{align} \label{eq:step}
  D^\bsalpha P_j g
  &\,=\, \left( \textstyle\prod_{i=1}^{|\bsalpha|} D_{k_i}\right) P_j g
  \,=\, \left( \textstyle\prod_{i=2}^{|\bsalpha|} D_{k_i}\right) P_j D_{k_1} g \nonumber \\
  &\,=\, \cdots
  \,=\, D_{k_{|\bsalpha}|} P_j
       \left( \textstyle\prod_{i=1}^{|\bsalpha|-1} D_{k_i} \right) g
  \,=\, P_j \left( \textstyle\prod_{i=1}^{|\bsalpha|} D_{k_i} \right) g
  \,=\, P_j D^\bsalpha g,
\end{align}
where $k_i\in\setD\setminus\{j\}$ and $k_1,\ldots,k_{|\bsalpha|}$ need not
be distinct. Each step in \eqref{eq:step} involves a single
differentiation under the integral sign, and is justified by the Leibniz
theorem (Theorem~\ref{thm:swap}) because we know from the property (v) in
Subsection~\ref{sec:sob} that $(\prod_{i=1}^{\ell} D_{k_i})
g\in\calW_{d,p,\bszeta}^{\,r-\ell} \subseteq \calW_{d,p,\bszeta}^1$ for
all $\ell\le |\bsalpha|-1\le r-1$. We have therefore
\begin{align*}
  &\|D^\bsalpha P_j g\|_{\calL_{p,\zeta_{d-1,\bsalpha_{-j}}}}
  \,=\, \|P_j D^\bsalpha g\|_{\calL_{p,\zeta_{d-1,\bsalpha_{-j}}}} \\
  &\,=\,   \Bigg(\int_{\bbR^{\setD\setminus\{j\}}}
  \left|
  \int_{-\infty}^\infty (D^\bsalpha g)(\bsx)\,\rho(x_j)\,\rd x_j
  \right|^p
  \,\zeta_{d-1,\bsalpha_{-j}}(\bsx_{-j})\,\rd\bsx_{-j}
  \Bigg)^{1/p} \\
  &\,\le\,  \Bigg(\int_{\bbR^{\setD\setminus\{j\}}}
  \left(
  \int_{-\infty}^\infty |(D^\bsalpha g)(\bsx)|^p\,\rho(x_j)\,\rd x_j
  \right)
  \,\zeta_{d-1,\bsalpha_{-j}}(\bsx_{-j})\,\rd\bsx_{-j}
  \Bigg)^{1/p}
  \,=\, \|D^\bsalpha g\|_{\calL_{p,\zeta_{d,\bsalpha}}} \,<\, \infty,
\end{align*}
where we applied H\"older's inequality to the inner integral as in \cite[Equation~(2.11)]{GKS13}
and used $ \zeta_{\alpha_j}(x_j) = \zeta_0(x_j)=\rho(x_j)$. This
completes the proof.
\end{proof}

The implicit function theorem stated below is crucial for the main results
of this paper. In the rest of the paper, for any $r\ge 0$, $k\ge 1$, and
an open set $U\subset\bbR^k$, we define $\calC^r(U)$ to be the space of
functions whose classical derivatives of order $\le r$ are all continuous
at every point in $U$.

\begin{theorem}[The Implicit Function Theorem {\cite[Theorem~2.3]{GKS13}}] \label{thm:implicit}
Let $j\in\setD$. Suppose $\phi \in \calC^1(\bbR^d)$ satisfies
\begin{equation} \label{djnot}
  (D_j \phi) (\bsx) \,>\, 0 \qquad\mbox{for all}\quad \bsx \in \bbR^d.
\end{equation}
Let
\[
  U_j \,:=\, \{ \bsx_{-j}\in\bbR^{d-1} \,:\,
  \phi(x_j,\bsx_{-j})=0 \mbox{ for some $($unique$)$ } x_j\in\bbR\}.
\]
If $U_j$ is not empty then there exists a unique function $\psi_j \in
\calC^1 (U_j)$ such that
\[
  \phi (\,\psi_j (\bsx_{-j}),\bsx_{-j}) \,=\, 0
  \qquad\mbox{for all}\quad \bsx_{-j}
  \in U_j,
\]
and for all $k \ne j$ we have
\begin{equation} \label{dkpsi-first}
  (D_k\psi_j) (\bsx_{\setD\setminus\{j\}})
  \,=\, - \frac{(D_k\phi)(\bsx)}{(D_j \phi)(\bsx)}\;
  \bigg|_{\;x_j \,=\, \psi_j(\bsx_{\setD\setminus\{j\}})}
  \quad\mbox{for all}\quad \bsx_{\setD\setminus\{j\}}
  \in U_j.
\end{equation}
If in addition $\phi \in \calC^r(\bbR^d)$ for some $r\ge 2$, then $\psi_j
\in \calC^r (U_j)$.
\end{theorem}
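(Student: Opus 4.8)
The plan is to derive this global statement from the classical (local) implicit function theorem, exploiting the hypothesis $(D_j\phi)(\bsx)>0$ in two ways: to make each one-dimensional slice $x_j\mapsto\phi(x_j,\bsx_{-j})$ strictly increasing, hence with at most one zero (so that $\psi_j$ is unambiguously defined on $U_j$ and is automatically the unique such function), and to patch the local solution branches into a single function on all of $U_j$. First I would fix $\bsx_{-j}\in\bbR^{d-1}$ and note that $t\mapsto\phi(t,\bsx_{-j})$ is $\calC^1$ on $\bbR$ with positive derivative, hence strictly increasing, so it has at most one zero; thus for each $\bsx_{-j}\in U_j$ there is exactly one $x_j=:\psi_j(\bsx_{-j})$ with $\phi(\psi_j(\bsx_{-j}),\bsx_{-j})=0$, and any function satisfying the stated zero-identity coincides with this $\psi_j$.

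Next I would establish openness of $U_j$ together with $\psi_j\in\calC^1(U_j)$. Fix $\bsy^\star\in U_j$ and set $x_j^\star:=\psi_j(\bsy^\star)$. Since $\phi\in\calC^1(\bbR^d)$ and $(D_j\phi)(x_j^\star,\bsy^\star)>0$, the classical implicit function theorem provides an open neighbourhood $N$ of $\bsy^\star$ in $\bbR^{d-1}$ and a function $\widetilde\psi\in\calC^1(N)$ with $\widetilde\psi(\bsy^\star)=x_j^\star$ and $\phi(\widetilde\psi(\bsy),\bsy)=0$ for $\bsy\in N$; in particular $N\subseteq U_j$, so $U_j$ is open, and by the uniqueness of the slice-root we have $\widetilde\psi=\psi_j$ on $N$, so $\psi_j$ is $\calC^1$ near every point of $U_j$. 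Differentiating the identity $\phi(\psi_j(\bsx_{-j}),\bsx_{-j})=0$ with respect to $x_k$ for $k\ne j$ via the chain rule gives
\[
  (D_j\phi)(\psi_j(\bsx_{-j}),\bsx_{-j})\,(D_k\psi_j)(\bsx_{-j})
  +(D_k\phi)(\psi_j(\bsx_{-j}),\bsx_{-j})=0,
\]
and dividing by the (positive) factor $(D_j\phi)(\psi_j(\bsx_{-j}),\bsx_{-j})$ yields the stated formula for $D_k\psi_j$.

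For the regularity upgrade when $\phi\in\calC^r(\bbR^d)$ with $r\ge2$, I would bootstrap on that formula: assuming $\psi_j\in\calC^m(U_j)$ for some $1\le m\le r-1$, the map $\bsx_{-j}\mapsto(\psi_j(\bsx_{-j}),\bsx_{-j})$ is $\calC^m$, and since $D_k\phi,D_j\phi\in\calC^{r-1}\subseteq\calC^m$ with $D_j\phi$ nowhere vanishing, the right-hand side $-(D_k\phi)/(D_j\phi)$ restricted to this curve is $\calC^m$; hence $D_k\psi_j\in\calC^m(U_j)$ for all $k\ne j$, i.e.\ $\psi_j\in\calC^{m+1}(U_j)$, and iterating from $m=1$ to $m=r$ gives $\psi_j\in\calC^r(U_j)$. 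The only genuinely non-mechanical point, and hence the main obstacle, is the globalisation: the classical implicit function theorem only yields local solution sheets, and what guarantees that the branches $\widetilde\psi$ obtained on overlapping neighbourhoods agree and assemble into a single well-defined $\psi_j$ on all of $U_j$ is exactly the global strict monotonicity $D_j\phi>0$; the remaining ingredients (openness of $U_j$, the $\calC^1$ conclusion, the derivative formula, and the $\calC^r$ bootstrap) are routine.
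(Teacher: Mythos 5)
The paper provides no proof of this theorem; it is quoted verbatim from \cite[Theorem~2.3]{GKS13} and used as a black box, so there is no in-paper argument to compare against. Your proof is self-contained and correct, and it is the natural derivation: the global hypothesis $(D_j\phi)>0$ forces each slice $t\mapsto\phi(t,\bsx_{-j})$ to be strictly increasing, which gives uniqueness of the root and hence of $\psi_j$ as a function; the classical local implicit function theorem at $(\psi_j(\bsy^\star),\bsy^\star)$ yields an open neighbourhood $N\subseteq U_j$ (so $U_j$ is open) and a $\calC^1$ local solution branch, which must coincide with $\psi_j$ by uniqueness; implicit differentiation gives the formula for $D_k\psi_j$; and the bootstrap on that formula upgrades the regularity to $\calC^r$. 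Two small remarks. First, the bootstrap iteration runs over $m=1,\dots,r-1$ (you prove $\psi_j\in\calC^{m+1}$ from $\psi_j\in\calC^m$ for $1\le m\le r-1$, and the final step $m=r-1$ gives $\psi_j\in\calC^r$), so the phrase ``iterating from $m=1$ to $m=r$'' is an off-by-one slip, though the intent is clear. Second, the bootstrap is optional: the $\calC^r$ version of the local implicit function theorem already gives $\widetilde\psi\in\calC^r(N)$ directly when $\phi\in\calC^r$, and the patching argument is unchanged; your bootstrap is a fine alternative that only needs the $\calC^1$ local theorem as input.
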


Note that the derivatives in the implicit function theorem are classical
derivatives, and the condition \eqref{djnot} needs to hold for \emph{all}
$\bsx\in\bbR^d$.

We are almost ready to prove Theorem~\ref{thm:main1}. But first we give a
remark and a couple of auxiliary results.

\begin{remark}
It is easily seen that $U_j$ and $U_j^+$ in Theorem~\ref{thm:main1} can
also be defined by
\begin{align*}
  U_j
  &\,=\, \left\{\bsy\in\bbR^{d-1}:\lim_{x_j\to -\infty}\phi(x_j,\bsy)<0\right\},
  \\
  U_j^+
  &\,=\, \left\{\bsy\in\bbR^{d-1}:\lim_{x_j\to -\infty}\phi(x_j,\bsy)\ge 0\right\}.
\end{align*}
\end{remark}

In the proof of the theorem we make essential use of the following lemma.
This result is needed to ensure that all the derivatives we encounter are
continuous across the boundary between $U_j$ and~$U_j^+$.

\begin{lemma}\label{lem:toinf}
Under the condition \eqref{phi}, the function $\psi_j:\R\to\R$ has the
following property
\begin{equation}\label{psitomininfty}
\psi_j(\bsy)\to -\infty
\end{equation}
as $\bsy$ approaches a point on the boundary of $U_j$.
\end{lemma}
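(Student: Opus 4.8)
The plan is to argue by contradiction, using the characterization of $U_j$ and $U_j^+$ from the Remark together with the monotonicity condition \eqref{phi}. Suppose $\bsy^\star$ is a boundary point of $U_j$ and $(\bsy_n)_{n\in\N}\subset U_j$ is a sequence with $\bsy_n\to\bsy^\star$, but $\psi_j(\bsy_n)\not\to-\infty$. Then, passing to a subsequence, either $\psi_j(\bsy_n)$ is bounded above and below, or $\psi_j(\bsy_n)\to+\infty$. In the first case, again after passing to a subsequence, $\psi_j(\bsy_n)\to t^\star$ for some finite $t^\star\in\R$, and by continuity of $\phi$ (which holds since $\phi\in\calC^r(\bbR^d)$ with $r\ge 1$) and the defining relation $\phi(\psi_j(\bsy_n),\bsy_n)=0$ we get $\phi(t^\star,\bsy^\star)=0$. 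By \eqref{phi} the function $x_j\mapsto\phi(x_j,\bsy^\star)$ is strictly increasing with limit $+\infty$, so $\lim_{x_j\to-\infty}\phi(x_j,\bsy^\star)<\phi(t^\star,\bsy^\star)=0$, which by the Remark places $\bsy^\star\in U_j$. But $U_j$ is open (as asserted in Theorem~\ref{thm:main1}, via the implicit function theorem), so $\bsy^\star$ cannot be a boundary point of $U_j$ — a contradiction.

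The remaining case is $\psi_j(\bsy_n)\to+\infty$. Here the idea is that if the root escapes to $+\infty$ along a convergent sequence of $\bsy$'s, then $\phi$ must be everywhere nonnegative at the limit, again forcing $\bsy^\star$ into the interior of $U_j^+$ and hence not on $\partial U_j$. Concretely, fix any $M\in\R$; since $\phi$ is continuous and $\psi_j(\bsy_n)\to+\infty$, for large $n$ we have $\psi_j(\bsy_n)>M$, and by monotonicity in $x_j$ (using $D_j\phi>0$) together with $\phi(\psi_j(\bsy_n),\bsy_n)=0$ we get $\phi(M,\bsy_n)\le 0$. Letting $n\to\infty$ gives $\phi(M,\bsy^\star)\le 0$ for every $M\in\R$, so $\lim_{x_j\to-\infty}\phi(x_j,\bsy^\star)\le 0$. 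If the limit is strictly negative then $\bsy^\star\in U_j$, contradicting $\bsy^\star\in\partial U_j$ by openness of $U_j$ as above. If the limit equals $0$, then since $x_j\mapsto\phi(x_j,\bsy^\star)$ is strictly increasing we would have $\phi(x_j,\bsy^\star)>0$ for all $x_j$ except possibly in the limit, so $\phi(x_j,\bsy^\star)\ge 0$ everywhere with no zero; hence $\bsy^\star\in U_j^+$, and because $U_j^+$ equals $\bbR^{d-1}\setminus U_j$ with $U_j$ open, $\bsy^\star$ lies in the closure of $U_j$; but we have just shown $\phi(M,\bsy^\star)\le 0$ for all $M$, contradicting $\phi(M,\bsy^\star)>0$ unless the only zero is at $-\infty$ — in any event the strict monotonicity forces $\phi(M,\bsy^\star)<0$ for $M$ small, again putting $\bsy^\star$ in the open set $U_j$, a contradiction. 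So this case cannot occur either.

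The main obstacle I anticipate is handling the borderline situation $\lim_{x_j\to-\infty}\phi(x_j,\bsy^\star)=0$ cleanly, since there the point $\bsy^\star$ sits exactly at the interface described in the Remark and one must use strict monotonicity in $x_j$ decisively rather than the mere sign of the limit. The cleanest route is probably to note that strict monotonicity means the limit as $x_j\to-\infty$ is a strict lower bound: $\phi(x_j,\bsy^\star)>\lim_{s\to-\infty}\phi(s,\bsy^\star)$ for every finite $x_j$. Thus $\lim_{x_j\to-\infty}\phi(x_j,\bsy^\star)<0$ is \emph{equivalent} to: there exists a finite $x_j$ with $\phi(x_j,\bsy^\star)\le 0$, which combined with $\phi\to+\infty$ as $x_j\to+\infty$ gives a zero, i.e. $\bsy^\star\in U_j$. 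Conversely $\bsy^\star\in U_j^+$ forces $\phi(x_j,\bsy^\star)>0$ for all finite $x_j$. With this dichotomy stated up front, both escape scenarios ($\psi_j(\bsy_n)$ bounded, or $\to+\infty$) are dispatched by the observation that they each produce a finite $x_j$ with $\phi(x_j,\bsy^\star)\le 0$, hence $\bsy^\star\in U_j$, contradicting $\bsy^\star\in\partial U_j$ since $U_j$ is open. I would therefore reorganize the write-up to isolate that equivalence as the first step, then treat the two escape cases in parallel as short consequences.
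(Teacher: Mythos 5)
Your proof is correct and follows essentially the same strategy as the paper: argue by contradiction, show that $\psi_j(\bsy_n)$ cannot have a finite accumulation point (by continuity of $\phi$ forcing $\phi(t^\star,\bsy^\star)=0$ and hence $\bsy^\star\in U_j$, contradicting that $U_j$ is open), and then rule out $\psi_j(\bsy_n)\to+\infty$. The only genuine difference is in that last step. The paper fixes an $x_j^\star$ with $\phi(x_j^\star,\bsy^\star)>0$, surrounds it with a ball where $\phi>0$, and then derives the contradiction $0<\phi(x_j^\star,\bsy_{n_m})<\phi(\psi(\bsy_{n_m}),\bsy_{n_m})=0$ from strict monotonicity. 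You instead pass to the limit in $n$ first, getting $\phi(M,\bsy^\star)\le 0$ for every finite $M$, which contradicts $\phi(x_j,\bsy^\star)\to+\infty$ as $x_j\to+\infty$; this avoids the ball entirely and is arguably a touch cleaner. However, the middle of your write-up (the case analysis on $\lim_{x_j\to-\infty}\phi(x_j,\bsy^\star)$ being $<0$ versus $=0$) is an unnecessary detour: once you know $\phi(M,\bsy^\star)\le 0$ for all $M$, the contradiction with the growth condition is immediate and the sign of the $x_j\to-\infty$ limit is irrelevant. Your closing paragraph recognizes this and gives the tight version; leading with that observation, as you propose, is the right way to present it.
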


\begin{proof}
Consider a point $\bsy^\star$ a point on the boundary of $U_j$, and hence
(because $U_j$ is open) lying in $U_j^+$. Consider also a sequence
$(\bsy_n)_{n \in \mathbb{N}}\subset U_j$ with $\bsy_n \rightarrow
\bsy^\star$ as $n \rightarrow \infty$. We assert that the sequence
$(\psi(\bsy_n))_{n \in \mathbb{N}} $ has no accumulation points in $\R$.
This is true because if we assume otherwise then there would exist a
convergent subsequence $(\psi(\bsy_{n_k}))_{k \in \mathbb{N}} $, with
$\psi(\bsy_{n_k}) \rightarrow x_j^\star$ as $k \rightarrow \infty$ for
$x_j^\star \in \R$. But because of the continuity of $\phi$ we must have
\[
\phi(x_j^\star,\bsy^\star)=\lim_{k\rightarrow \infty}\phi(\psi(\bsy_{n_k}), \bsy_{n_k}) =  0,
\]
since by definition $\phi(\psi(\bsy_{n_k}), \bsy_{n_k}) = 0, \forall k \in \mathbb{N}$.  This implies that $\bsy^\star
\in U_j$, which is a contradiction. Therefore the sequence
$(\psi(\bsy_n))_{n \in \mathbb{N}} $ has no accumulation points in $\R$.
This implies (due to the Bolzano-Weierstrass Theorem) that
$(\psi(\bsy_n))_{n \in \mathbb{N}}\cap [a,b]$ is a finite set, for each
interval $[a,b], a,b \in \R$. Thus,
\[
 \lim_{n \rightarrow \infty} \left| \psi(\bsy_n) \right| = \infty.
\]

To eliminate the possibility that $+\infty$ is an accumulation point of
$\psi(\bsy_n)$ we observe that, due to condition \eqref{unbounded}, and
with $\bsy^\star$ as above, there exists an $x_j^\star$ such that
$\phi(x_j^\star,\bsy^\star)>0$. Because of the continuity of $\phi$, there
is a ball around the point $(x_j^\star,\bsy^\star)$, denoted by
$B(x_j^\star,\bsy^\star)$ such that $\phi$ is positive for each point in
$B(x_j^\star,\bsy^\star)$. Assume now that we have a subsequence $
(\psi(\bsy_{n_m}))_{m \in \mathbb{N}}$ such that $\lim_{m \rightarrow
\infty} \psi(\bsy_{n_m}) = +\infty$. Because $\bsy_{n_m}$ converges to
$\bsy^\star$ as $m \rightarrow \infty$, it follows  that
$(x_j^\star,\bsy_{n_m}) \in B(x_j^\star,\bsy^\star) $ for all $m$
sufficiently large. But assumption $\psi(\bsy_{n_m}) \to +\infty$, and
the monotonicity condition in \eqref{phi} implies that
for all $m$ sufficiently large we have $\psi(\bsy_{n_m})>x_j^\star$, and therefore $0 <
\phi(x_j^\star,\bsy_{n_m})<\phi(\psi(\bsy_{n_m}),\bsy_{n_m})=0$, which is
clearly a contradiction. Therefore we conclude that
\[
 \lim_{n \rightarrow \infty}  \psi(\bsy_n)  = -\infty.
\]
\end{proof}

Another auxiliary result is needed to show that the assumption \eqref{h1}
implies continuous differentiability of $P_j f$ at boundary points of
$U_j$.

\begin{lemma}\label{lem:Crproperty}
Let $r\ge 0$ and $k\ge 1$. Suppose $g \in \calC^r(U)$ for some open domain
$U \subset \R^k$ and $g(\bsy)=0$ for all $\bsy \in U^c$. Suppose that for
any multi-index $\bsalpha$ with $|\bsalpha| \leq r$ and any sequence
$(\bsy_n)_{n \in \mathbb{N}}\subset U$ we have
\begin{equation}\label{Crprop:Assumption}
   \lim_{n \rightarrow \infty} \bsy_n = \bsy^\star \mbox{ with }  \bsy^\star \in U^c     \quad \Rightarrow  \quad
\lim_{n \rightarrow \infty} (D^{\bsalpha} g)(\bsy_n) = 0 \; .
\end{equation}
Then we have  $g \in \calC^r(\R^k)$, with $(D^\bsalpha g)(\bsy) = 0$ for
all $\bsy \in U^c$.
\end{lemma}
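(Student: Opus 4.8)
The plan is to work directly with the candidate derivatives. For each multi-index $\bsalpha$ with $|\bsalpha|\le r$, define $G_\bsalpha:\R^k\to\R$ by $G_\bsalpha:=D^\bsalpha g$ on $U$ and $G_\bsalpha:=0$ on $U^c$, so that $G_\bszero = g$. The aim is to show that each $G_\bsalpha$ is continuous on all of $\R^k$ and is in fact the classical derivative $D^\bsalpha g$ everywhere; since $G_\bsalpha$ vanishes on $U^c$ by construction, this is exactly the conclusion.

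First I would prove that each $G_\bsalpha$ (with $|\bsalpha|\le r$) is continuous on $\R^k$. On the open set $U$ this is immediate from $g\in\calC^r(U)$ and $|\bsalpha|\le r$; on the interior of $U^c$ the function is identically zero; and at a boundary point $\bsy^\star\in\partial U$ (which lies in $U^c$ since $U$ is open), one takes an arbitrary sequence $\bsz_n\to\bsy^\star$, splits it into its terms in $U^c$ (where $G_\bsalpha=0$) and its terms in $U$ (where $(D^\bsalpha g)(\bsz_n)\to 0$ by the hypothesis \eqref{Crprop:Assumption}), and concludes $G_\bsalpha(\bsz_n)\to 0 = G_\bsalpha(\bsy^\star)$. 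This already disposes of the case $r=0$.

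Next, for $|\bsalpha|\le r-1$ and each coordinate direction $i$, I would show that $D_i G_\bsalpha$ exists at every point of $\R^k$ and equals $G_{\bsalpha+\bse_i}$. Inside $U$ this is clear (there $G_\bsalpha=D^\bsalpha g$ is $\calC^1$), and at interior points of $U^c$ both sides vanish. The substance is at a boundary point $\bsy^\star\in\partial U$: restrict to the line $s\mapsto\bsy^\star+(s-s^\star)\bse_i$ and set $F(s):=G_\bsalpha(\bsy^\star+(s-s^\star)\bse_i)$, $\varphi(s):=G_{\bsalpha+\bse_i}(\bsy^\star+(s-s^\star)\bse_i)$; both are continuous by the previous step, with $\varphi(s^\star)=0$ because the base point lies in $U^c$. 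Given $J>s^\star$ with the corresponding point in $U$, let $(a,b)$ be the maximal open subinterval of the parameter line lying in $U$ and containing $J$; then necessarily $s^\star\le a<J$ with $a$ finite, $F(a)=0$, and since $F$ is $\calC^1$ on $(a,b)$ with $F'=\varphi$ there and continuous up to $a$, the fundamental theorem of calculus gives $|F(J)|=\bigl|\int_a^J\varphi\bigr|\le (J-s^\star)\max_{[s^\star,J]}|\varphi|$; the same bound holds trivially if the point at $J$ lies in $U^c$. Dividing by $J-s^\star$ and letting $J\downarrow s^\star$, continuity of $\varphi$ together with $\varphi(s^\star)=0$ forces the right derivative of $F$ at $s^\star$ to be $0=\varphi(s^\star)$; the left side is symmetric, so $F'(s^\star)=\varphi(s^\star)$, i.e.\ $(D_iG_\bsalpha)(\bsy^\star)=G_{\bsalpha+\bse_i}(\bsy^\star)$.

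Finally, an induction on $|\bsalpha|$ using the relations $D_iG_\bsalpha=G_{\bsalpha+\bse_i}$ shows $D^\bsalpha g=G_\bsalpha$ for every $|\bsalpha|\le r$; hence all classical derivatives of $g$ of order $\le r$ exist, coincide with the corresponding (continuous) $G_\bsalpha$, and so $g\in\calC^r(\R^k)$ with $D^\bsalpha g=0$ on $U^c$. The step I expect to be the main obstacle is the boundary argument in the third paragraph: because $U^c$, and hence $\partial U$, can be genuinely intricate — precisely the behaviour exhibited by the $\sin(1/x_2)$ example in Section~\ref{sec:method} — one cannot reason pointwise or modulo a small exceptional set, and the device is to collapse everything to one dimension along coordinate lines and bound the difference quotient of $G_\bsalpha$ by the already-controlled next-order function $G_{\bsalpha+\bse_i}$.
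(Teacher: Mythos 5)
Your proof is correct and follows essentially the same route as the paper: at a boundary point you restrict along a coordinate line, identify the maximal subinterval lying in $U$, and bound the difference quotient of $D^{\bsalpha}g$ by the (vanishing) next-order derivative — the paper phrases this via the mean value theorem with $\bar h$ the last parameter value in $U^c$, while you phrase it via the fundamental theorem of calculus on $(a,J)$, which is an equivalent estimate. The only organizational difference is that you establish continuity of every $G_{\bsalpha}$ up front directly from \eqref{Crprop:Assumption}, whereas the paper folds continuity into a single induction on the derivative order; both are sound.
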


\begin{proof}
The statement is obviously true for $r=0$ where mere continuity of $g$ is
asserted. Now suppose it holds for a natural number $r_0 \geq 0$ and
consider any multi-index $\bsalpha = \bsalpha_0 + \bse_i$, with $\bse_i$
denoting a canonical basis vector and $|\bsalpha_0|= r_0$, and hence
$|\bsalpha| = r_0 +1 \le r$. Then we have to show that $(D^\bsalpha
g)(\bsy)$ exists at all $\bsy \in \R^k$ and is continuous at every point
of $\R^k$. For points in $U$ the derivative $D^\bsalpha g$ exists and is
continuous by assumption, and for points in the interior of $U^c$ the
derivative $D^\bsalpha g$ exists and is continuous because $g$ is zero
there. So it remains to consider the existence and continuity of
$D^\bsalpha g$ at any boundary point $\bsy^\star$, i.e., at any limit
point $\bsy^\star$ of a sequence $(\bsy_n)_{n \in \mathbb{N}} \subset U$.

To show the existence, consider the scalar valued function $D^{\bsalpha_0}
g$ which is by the induction hypothesis continuous on all of $\R^k$ and
vanishes at any boundary point $\bsy^\star$. Consider first the case $h>0$
and a point $\bsy^\star + h \bse_i$. If $\bsy^\star + h \bse_i \in U$,
then because $U$ is open and $\bsy^\star \in U^c$, we have for
\[
 \bar{h} \,:=\,
 \sup\{h'\;:\;  0\le h' < h ,\; \bsy^* + h' \bse_i \in U^c\}
\]
that $\bsy^\star + h' \bse_i \in U$ for all $\bar{h}<h'\le h$.
Furthermore, because $U^c$ is closed it follows that
$(D^{\bsalpha_0}g)(\bsy^\star+\bar{h}\bse_i) = 0 =
(D^{\bsalpha_0}g)(\bsy^\star)$, and thus we conclude from the mean value
theorem that
\[
  (D^{\bsalpha_0}g)(\bsy^\star+h\bse_i) - (D^{\bsalpha_0}g)(\bsy^\star)
  = (D^{\bsalpha_0}g)(\bsy^\star+h\bse_i) - (D^{\bsalpha_0}g)(y^\star+\bar{h}\bse_i)
  = (h-\bar{h})\, (D_i D^{\bsalpha_0}g)(\bsy^*+h^\star\bse_i)
\]
for some $h^\star$ satisfying $\bar{h}< h^\star <h$. Hence we have for the
quotient
\[
 \frac{(D^{\bsalpha_0} g)(\bsy^\star + h \bse_i) - (D^{\bsalpha_0} g)(\bsy^\star)  }{h}
 \,=\,
 \begin{cases}
 \frac{h-\bar{h}}{h}\, (D_iD^{\bsalpha_0} g)(\bsy^\star + h^\star \bse_i)
 , \, \; \bar{h}< h^\star <h,
 & \text{ if } \bsy^* + h \bse_i \in U,\\
 0 & \text{ if } \bsy^\star + h \bse_i \in U^c.
\end{cases}
\]
Then using the assumption \eqref{Crprop:Assumption}, letting $h$ be
arbitrarily small, using that $|\bar{h}|\le |h|$, and considering the
analogous situation for $h<0$, we obtain the existence of
$D_iD^{\bsalpha_0} g$ at $ \bsy^\star$, with $(D_iD^{\bsalpha_0}
g)(\bsy^\star)=0$.

To show the derivative continuity at a boundary point $\bsy^\star$,
consider any sequence $(\bsy_n)_{n \in \mathbb{N}}\subset \R^k$ with
$\lim_{n \rightarrow \infty} \bsy_n = \bsy^\star$. For a given $n$ either
$\bsy_n\in U$, in which case \eqref{Crprop:Assumption} applies, or
$\bsy_n\in U^c$, in which case $(D_i D^{\bsalpha_0}g)(\bsy_n)=0$, as
above, so that both subsequences converge to $0=(D_iD^{\bsalpha_0}
g)(\bsy^\star)$. Finally, since all partial derivatives of order $r_0+1$
are now proved continuous in $\bbR^k$, those mixed partial derivatives are
symmetric, and we can write $D_i D^{\bsalpha_0}g = D^\bsalpha g$.

Hence $D^\bsalpha g$ exists and is continuous on all of $\R^k$, i.e., the
induction step is proved. It follows then that $g\in \calC^r(\R^k)$.
\end{proof}

We are now ready to prove Theorem~\ref{thm:main1}.

\begin{proofof}{Theorem~\ref{thm:main1}}
We focus on the non-trivial case when $U_j$ is not empty. Given that
$\phi\in\calC^r(\bbR^d)$, that $(D_j\phi)(\bsx)\ne 0$ for all
$\bsx\in\bbR^d$, and that $U_j$ is not empty, it follows from the implicit
function theorem (see Theorem~\ref{thm:implicit}) 
that the set $U_j$ is open, and that there exists a unique function
$\psi\equiv\psi_j\in\calC^r(U_j)$ for which
\begin{equation} \label{psi}
  \phi(x_j,\bsy) \,=\,0
  \;\iff\;
  \psi(\bsy) \,=\, x_j
  \quad\mbox{for all }\;
  \bsy\in U_j.
\end{equation}
This justifies the existence of the function $\psi$ as stated in the
theorem.

For the function $f(\bsx) = \theta(x_j,\bsy)\,{\rm ind}(\phi(x_j,\bsy))$
we can write $P_jf$ defined by \eqref{firstPj} as
\begin{equation} \label{lim}
  (P_jf)(\bsy)
  \,=\, \int_{x_j\in \bbR \,:\, \phi(x_j,\bsy)\ge 0}
  \theta(x_j,\bsy)\,\rho(x_j)\,\rd x_j.
\end{equation}
It follows from the condition $(D_j\phi)(\bsx)> 0$ for all $\bsx\in\bbR^d$
and the continuity of $D_j\phi$ that, for fixed~$\bsy$, $\phi(x_j,\bsy)$
is a strictly increasing function of $x_j$.

We now determine the limits of integration in \eqref{lim}. If $\bsy\in
U_j^+$, then $\phi(x_j,\bsy) \ne 0$ for all $x_j\in\bbR$. Since $\phi$ is
continuous, strictly increasing in $x_j$, and tends to $+\infty$ as
$x_j\to +\infty$, we conclude that $\phi(x_j,\bsy)
> 0$ for all $x_j\in\bbR$, and thus we integrate $x_j$ from $-\infty$ to
$\infty$. On the other hand, if $\bsy\in U_j$, in which case
$\phi(x_j,\bsy)$ changes sign once as $x_j$ goes from $-\infty$ to
$\infty$, then there exists a unique $x_j^* = \psi(\bsy)\in\bbR$ for which
$\phi(x_j^*,\bsy)=0$, and in this case we integrate $x_j$ from
$\psi(\bsy)$ to $\infty$. Hence we can write \eqref{lim} as
\begin{align*}
  (P_jf)(\bsy)
  &\,=\,
  \begin{cases}
  \displaystyle\int_{-\infty}^\infty \theta(x_j,\bsy)\,\rho(x_j)\,\rd x_j
  & \mbox{if } \bsy \in U_j^+, \vspace{0.1cm}\\
  \displaystyle\int_{\psi(\bsy)}^\infty \theta(x_j,\bsy)\,\rho(x_j)\,\rd x_j
  & \mbox{if } \bsy \in U_j.
  \end{cases}
\end{align*}
Note that $P_jf$ is continuous across the boundary between $U_j$ and
$U_j^+$, since from Lemma~\ref{lem:toinf} it follows that $\psi(\bsy)\to
-\infty$ as $\bsy\in U_j$ approaches a boundary point of $U_j$.

By the Leibniz Theorem and the Inheritance Theorem, we know that the
function $(P_j\theta)(\bsy) = \int_{-\infty}^\infty
\theta(x_j,\bsy)\,\rho(x_j)\,\rd x_j$ for $\bsy\in \bbR^{d-1}$ is as
smooth as~$\theta$, i.e., $P_j\theta \in \calW^r_{d-1,p,\bszeta}\cap
\calC^r(\bbR^{d-1})$. Therefore, to obtain the same smoothness property
for $P_jf$ it suffices that we consider in the remainder of this proof the
difference
\begin{equation}
 g(\bsy)
 \,:=\, (P_jf)(\bsy) - (P_j\theta)(\bsy)
 \,=\,  \begin{cases}
         - \displaystyle\int_{-\infty}^{\psi(\bsy)}  \theta(x_j,\bsy)\,\rho(x_j)\,\rd x_j & \mbox{if }  \bsy \in U_j,\\
         0 & \mbox{if } \bsy \in U^+_j.
        \end{cases}
\end{equation}
%

First we differentiate $g$ with respect to the $k$th coordinate for any
$k\ne j$.
For $\bsy\in U_j$ we obtain, using the fundamental theorem of calculus,
\begin{align} \label{dkpjf}
  (D_k g)(\bsy)
  &\,=\,
  -\int_{-\infty}^{\psi(\bsy)}
  (D_k\theta)(x_j,\bsy)\,\rho(x_j)\,\rd x_j
  -\, \theta(\psi(\bsy),\bsy) \,
  \rho(\psi(\bsy))\,
  (D_k\psi)(\bsy) \nonumber \\
  &\,=\,
  -\int_{-\infty}^{\psi(\bsy)}
  (D_k\theta)(x_j,\bsy)\,\rho(x_j)\,\rd x_j
  +\, \theta(\psi(\bsy),\bsy)\,
  \frac{(D_k \phi)(\psi(\bsy),\bsy)}{(D_j \phi)(\psi(\bsy),\bsy)}
 \,\rho(\psi(\bsy)),
\end{align}
where we substituted using \eqref{dkpsi-first}
\begin{equation} \label{dkpsi}
 (D_k \psi)(\bsy)
 \,=\, - \frac
 {(D_k \phi)(\psi(\bsy),\bsy)}
 {(D_j \phi)(\psi(\bsy),\bsy)}.
\end{equation}
It follows from \eqref{h1} and Lemma~\ref{lem:toinf} that both terms in
\eqref{dkpjf} go to $0$ as $\bsy\in U_j$ approaches a boundary point
$\bsy^\star$ of $U_j$ lying in $U_j^+$. Hence the condition
\eqref{Crprop:Assumption} in Lemma~\ref{lem:Crproperty} holds with $r=1$,
and we conclude that $g\in\calC^1(\bbR^{d-1})$.

Next we differentiate with respect to the $\ell$th coordinate for any
$\ell\ne j$ (allowing the possibility that $\ell=k$).
For $\bsy\in U_j$ it is useful to note that for any sufficiently smooth
$d$-variate function $\xi$ the rule for partial differentiation and the
chain rule gives
\begin{equation}\label{chainrule}
  D_\ell (\xi(\psi(\bsy),\bsy))
  \,=\, (D_\ell \xi )(\psi(\bsy),\bsy) + (D_j \xi )(\psi(\bsy),\bsy)\, (D_\ell \psi)(\bsy).
\end{equation}
Thus we find for $\bsy\in U_j$
\begin{align} \label{dldkpjf}
  &(D_\ell D_k g)(\bsy) \,=\,
  -\int_{-\infty}^{\psi(\bsy)}
  (D_\ell D_k\theta)(x_j,\bsy)\,\rho(x_j)\,\rd x_j
  \,-\, (D_k\theta)(\psi(\bsy),\bsy)\, \rho(\psi(\bsy)) \, (D_\ell\psi)(\bsy) \nonumber \\
  &\,+\, [(D_\ell\, \theta)(\psi(\bsy),\bsy) + (D_j\theta)(\psi(\bsy),\bsy)\,(D_\ell\psi)(\bsy)]\,
  \frac{(D_k \phi)(\psi(\bsy),\bsy)}{(D_j \phi)(\psi(\bsy),\bsy)}\,\rho(\psi(\bsy)) \nonumber \\
  &\,+\, \theta(\psi(\bsy),\bsy)\,
  \frac{[(D_\ell D_k \phi)(\psi(\bsy),\bsy) + (D_j D_k \phi)(\psi(\bsy),\bsy)\,(D_\ell\psi)(\bsy)]}
       {(D_j \phi)(\psi(\bsy),\bsy)}\,\rho(\psi(\bsy)) \nonumber \\
  &\,-\, \theta(\psi(\bsy),\bsy)\,
  \frac{(D_k \phi)(\psi(\bsy),\bsy)\,
  [(D_\ell D_j \phi)(\psi(\bsy),\bsy) + (D_j D_j \phi)(\psi(\bsy),\bsy)\,(D_\ell\psi)(\bsy)]}
  {[(D_j \phi)(\psi(\bsy),\bsy)]^2}\,\rho(\psi(\bsy))
  \nonumber \\
  &\,+\, \theta(\psi(\bsy),\bsy)\,
  \frac{(D_k \phi)(\psi(\bsy),\bsy)}{(D_j \phi)(\psi(\bsy),\bsy)}\,\rho'(\psi(\bsy))\,
  (D_\ell\psi)(\bsy),
\end{align}
where we used again \eqref{dkpsi}. We have from \eqref{h1} and
Lemma~\ref{lem:toinf} that all terms in \eqref{dldkpjf} go to $0$ as
$\bsy\in U_j$ approaches a boundary point $\bsy^\star$ of $U_j$ lying in
$U_j^+$. Hence the condition \eqref{Crprop:Assumption} in
Lemma~\ref{lem:Crproperty} holds with $r=2$, and we conclude that
$g\in\calC^2(\bbR^{d-1})$.

In general, for every non-zero multi-index
$\bsalpha=(\alpha_1,\ldots,\alpha_d)$ with $|\bsalpha|\le r$ and $\alpha_j
= 0$,
we claim that for $\bsy\in U_j$
\begin{equation} \label{nice}
  (D^\bsalpha g)(\bsy) \,=\,
  -\int_{-\infty}^{\psi(\bsy)}
  (D^\bsalpha\,\theta)(x_j,\bsy)\,\rho(x_j)\,\rd x_j
  \,+\, \sum_{m=1}^{M_{|\bsalpha|}} h_{\bsalpha,m}(\bsy),
\end{equation}
where $M_{|\bsalpha|}$ is a nonnegative integer, and each function
$h_{\bsalpha,m}$ is of the form \eqref{h}, with integers $\beta,a,b,c$ and
multi-indices $\bsgamma^{(i)}$ and $\bseta$ satisfying
\begin{equation} \label{bounds}
 1\le a,b\le 2|\bsalpha|-1,\quad
 1\le |\bsgamma^{(i)}|\le |\bsalpha|, \quad
 0\le |\bseta|,c\le |\bsalpha|-1, \quad
 1\le |\bsgamma^{(i)}|+|\bseta|+c \le |\bsalpha|.
\end{equation}
We have from \eqref{h1} and Lemma~\ref{lem:toinf} that all terms in
\eqref{nice} go to $0$ as $\bsy\in U_j$ approaches a boundary point
$\bsy^\star$ of $U_j$ lying in $U_j^+$. Hence the condition
\eqref{Crprop:Assumption} in Lemma~\ref{lem:Crproperty} holds for a
general $r$, and we conclude that $g\in\calC^r(\bbR^{d-1})$.

We will prove \eqref{nice}--\eqref{bounds} by induction on $|\bsalpha|$.
The case $|\bsalpha|=1$ is shown in \eqref{dkpjf}; there we have $M_1= 1$,
$a=1$, $b=1$, $c=0$, $\beta=1$, $|\bsgamma^{(1)}|=1$, $|\bseta| = 0$, and
$|\bsgamma^{(i)}|+|\bseta|+c =1$. The case $|\bsalpha|=2$ is shown in
\eqref{dldkpjf}; there we have $M_2= 8$, $1\le a,b\le 3$, $0\le c\le 1$,
$\beta = \pm 1$, $1\le |\bsgamma^{(i)}|\le 2$, $0\le |\bseta|\le 1$, and
$1\le |\bsgamma^{(i)}|+|\bseta|+c \le 2$. To establish the inductive step
we now differentiate $D^\bsalpha g$ once more: for $\ell\ne j$ we have
from~\eqref{nice}
\begin{align} \label{term}
  (D_\ell  D^\bsalpha g)(\bsy)
  &\,=\, -\int_{-\infty}^{\psi(\bsy)}
  (D_\ell  D^\bsalpha\theta)(x_j,\bsy)\,\rho(x_j)\,\rd x_j
   -\, (D^\bsalpha\theta)(\psi(\bsy),\bsy)\,
   \rho(\psi(\bsy)) \, (D_\ell  \psi)(\bsy) \nonumber \\
  &\qquad +\, \sum_{m=1}^{M_{|\bsalpha|}} (D_\ell\, h_{\bsalpha,m})(\bsy).
\end{align}
For a typical term in \eqref{term}, we have from \eqref{h}
\begin{align*}
  &(D_\ell\, h)(\bsy) \\
  &\,=\, \beta\,
  \frac{[(D_\ell D^\bseta \theta)(\psi(\bsy),\bsy) + (D_jD^\bseta \theta)(\psi(\bsy),\bsy)\,(D_\ell\psi)(\bsy)]
          \prod_{i=1}^a [(D^{\bsgamma^{(i)}}\phi)(\psi(\bsy),\bsy)]}
       {[(D_j\phi)(\psi(\bsy),\bsy)]^b}\, \rho^{(c)}(\psi(\bsy)) \\
  &\qquad + \beta\,
  \frac{(D^\bseta \theta)(\psi(\bsy),\bsy)\,D_\ell \big(\prod_{i=1}^a [(D^{\bsgamma^{(i)}}\phi)(\psi(\bsy),\bsy)]\big)}
       {[(D_j\phi)(\psi(\bsy),\bsy)]^b}\, \rho^{(c)}(\psi(\bsy)) \\
  &\qquad + \beta\,
  \frac{(D^\bseta \theta)(\psi(\bsy),\bsy)\,\prod_{i=1}^a [(D^{\bsgamma^{(i)}}\phi)(\psi(\bsy),\bsy)]}
       {[(D_j\phi)(\psi(\bsy),\bsy)]^b}\, \rho^{(c+1)}(\psi(\bsy))\,
  (D_\ell \psi)(\bsy) \nonumber\\
  &\qquad - \beta b\,
  \frac{(D^\bseta \theta)(\psi(\bsy),\bsy)\,\prod_{i=1}^a [(D^{\bsgamma^{(i)}}\phi)(\psi(\bsy),\bsy)]}
 {[(D_j\phi)(\psi(\bsy),\bsy)]^{b+1}} \,\rho^{(c)}(\psi(\bsy)) \nonumber\\
 &\qquad\qquad \Big[
  (D_\ell  D_j\phi)(\psi(\bsy),\bsy) + (D_jD_j\phi)(\psi(\bsy),\bsy)\,
  (D_\ell \psi)(\bsy) \Big], \nonumber
\end{align*}
where
\begin{align*}
  &D_\ell \bigg(\prod_{i=1}^a [(D^{\bsgamma^{(i)}}\phi)(\psi(\bsy),\bsy)] \bigg) \\
  &\,=\, \sum_{t=1}^a \Bigg(
  \Big[(D_\ell D^{\bsgamma^{(t)}}\phi)(\psi(\bsy),\bsy) + (D_j D^{\bsgamma^{(t)}}\phi)
  (\psi(\bsy),\bsy) \, (D_\ell \psi)(\bsy) \Big] 
  \prod_{\satop{i=1}{i\ne t}}^a (D^{\bsgamma^{(i)}}\phi)(\psi(\bsy),\bsy)
  \Bigg).
\end{align*}
Thus we conclude that $D_\ell\, h$ is a sum of functions of the form
\eqref{h}, but with $a$ and $b$ increased by at most $2$, $c$ increased by
at most $1$, $|\beta|$ multiplied by a factor of at most $b$,
$|\bsgamma^{(i)}|$ and $|\bseta|$ increased by at most $1$, and with
$|\bsgamma^{(i)}| + |\bseta| + c$ increased by at most $1$. Hence, $D_\ell
D^\bsalpha g$ consists of a sum of functions of the form \eqref{h}
satisfying the constraints in \eqref{bounds}. This completes the induction
proof for \eqref{nice}--\eqref{bounds}.

We now turn to the task of showing that $D^\bsalpha g\in
\calL_{p,\zeta_{d-1,\bsalpha_{-j}}}$ for $p\in[1,\infty)$ and all
$\bsalpha$ satisfying $|\bsalpha|\le r$ and $\alpha_j = 0$. We need to
consider
\begin{align*}
  \int_{\bbR^{d-1}}
  |(D^\bsalpha g)(\bsy)|^p\,
  \zeta_{d-1,\bsalpha_{-j}}(\bsy)\,
  \rd\bsy 
  &\,=\,
  \int_{U_j}  |(D^\bsalpha g)(\bsy)|^p\,
  \zeta_{d-1,\bsalpha_{-j}}(\bsy)\,
  \rd\bsy,
\end{align*}
where we have split the integral noting that $U_j$ is open and its
complement $U_j^+$ is closed, as they are both Borel measurable, and that
$D^\bsalpha g$ is zero on $U_j^+$.


For $\bsy\in U_j$, it follows from H\"older's inequality and the special
form of $D^\bsalpha g$ in \eqref{nice} that
\begin{align*}
  |(D^\bsalpha g)(\bsy)|^p
  &\,=\, \left| - \int_{-\infty}^{\psi(\bsy)}
  (D^\bsalpha \theta)(x_j,\bsy)\,\rho(x_j)\, \rd x_j
  + \sum_{m=1}^{M_{|\bsalpha|}} h_{\bsalpha,m}(\bsy)\right|^p \\
  &\,\le\, \left( \int_{-\infty}^{\psi(\bsy)}
  |(D^\bsalpha \theta)(x_j,\bsy)|\, \rho(x_j)\, \rd x_j
  + \sum_{m=1}^{M_{|\bsalpha|}} |h_{\bsalpha,m}(\bsy)|\right)^p \\
  &\,\le\, (M_{|\bsalpha|}+1)^{p-1} \left( \left(\int_{-\infty}^{\psi(\bsy)}
  |D^\bsalpha \theta(x_j,\bsy)|\, \rho(x_j)\, \rd x_j\right)^p
  + \sum_{m=1}^{M_{|\bsalpha|}} |h_{\bsalpha,m}(\bsy)|^p\right) \\
  &\,\le\, (M_{|\bsalpha|}+1)^{p-1} \left(\int_{-\infty}^{\psi(\bsy)}
  |(D^\bsalpha \theta)(x_j,\bsy)|^p\, \rho(x_j)\, \rd x_j
  + \sum_{m=1}^{M_{|\bsalpha|}} |h_{\bsalpha,m}(\bsy)|^p\right),
\end{align*}
and thus
\begin{align*}
  & \int_{U_j} |(D^\bsalpha g)(\bsy)|^p\,
  \zeta_{d-1,\bsalpha_{-j}}(\bsy) \,\rd\bsy\\
  &\,\le\,
  (M_{|\bsalpha|}+1)^{p-1} \Bigg(\int_{\bbR^d}
  |(D^\bsalpha \theta)(\bsx)|^p\,\zeta_{d,\bsalpha}(\bsx)\,\rd\bsx
  + \sum_{m=1}^{M_{|\bsalpha|}}
  \int_{U_j}  |h_{\bsalpha,m}(\bsy)|^p \,\zeta_{d-1,\bsalpha_{-j}}(\bsy)\,\rd\bsy
  \Bigg)
  \,<\, \infty,
\end{align*}
with the finiteness coming because $\rho(x_j) = \zeta_0(x_j) =
\zeta_{\alpha_j}(x_j)$ and $\theta\in\calW^r_{d,p,\bszeta}$, and because
each integral involving $h_{\bsalpha,m}$ is finite due to the
condition~\eqref{h2}.
This 
proves that $\is{g} \in \calW^r_{d-1,p,\bszeta}$ as claimed.
\end{proofof}

\textbf {Acknowledgements} The authors acknowledge the support of the
Australian Research Council under the projects FT130100655 and
DP150101770.

\vskip 3pc


\begin{itemize}

\item[]
{\sc Andreas Griewank} \\
School of Mathematical Sciences and Information Technology\\
Yachay Tech\\
Urcuqui 100119, Imbabura\\
Ecuador\\ email: griewank@yachaytech.edu.ec

\item[]
{\sc Frances Y. Kuo} \\
School of Mathematics and Statistics \\
The University of New South Wales \\
Sydney NSW 2052, Australia \\
email: f.kuo@unsw.edu.au

\item[]
{\sc Hernan Le\"ovey} \\
Structured Energy Management Team\\
Axpo AG \\
Baden, Switzerland \\
Post: Parkstrasse 23 CH-5401 Baden, Switzerland \\
email: hernaneugenio.leoevey@axpo.com

\item[]
{\sc Ian H. Sloan} \\
School of Mathematics and Statistics \\
The University of New South Wales \\
Sydney NSW 2052, Australia \\
email: i.sloan@unsw.edu.au

\end{itemize}


\begin{thebibliography}{99}

\setlength{\parsep }{-0.5ex}
\setlength{\itemsep}{-0.5ex}
\newcommand\BAMS{\emph{Bull. Amer. Math. Soc.\ }}
\newcommand\BIT{\emph{BIT\ }}
\newcommand\Com{\emph{Computing\ }}
\newcommand\CA{\emph{Constr. Approx.\ }}
\newcommand\FCM{\emph{Found. Comput. Math.\ }}
\newcommand\JAT{\emph{J. Approx. Th.\ }}
\newcommand\JC{\emph{J. Complexity\ }}
\newcommand\JCP{\emph{J. of Computational Physics\ }}
\newcommand\JMA{\emph{SIAM J. Math. Anal.\ }}
\newcommand\JMAA{\emph{J. Math. Anal. Appl.\ }}
\newcommand\JMM{\emph{J. Math. Mech.\ }}
\newcommand\JMP{\emph{J. Math. Physics\ }}
\newcommand\MC{\emph{Math. Comp.\ }}
\newcommand\NA{\emph{Numer. Alg.\ }}
\newcommand\NM{\emph{Numer. Math.\ }}
\newcommand\RMJ{\emph{Rocky Mt. J. Math.\ }}
\newcommand\SJNA{\emph{SIAM J. Numer. Anal.\ }}
\newcommand\SR{\emph{SIAM Rev.\ }}
\newcommand\TAMS{\emph{Trans. Amer. Math. Soc.\ }}
\newcommand\TCS{\emph{Theoretical Computer Science\ }}
\newcommand\TOMS{\emph{ACM Trans. Math. Software\ }}
\newcommand\USSR{\emph{USSR Comput. Maths. Math. Phys.\ }}
\frenchspacing

\bibitem{ACN13a}
 N.~Achtsis, R.~Cools, and D.~Nuyens,
 \emph{Conditional sampling for barrier option pricing under the LT method},
 SIAM J.\ Financial Math. \textbf{4} (2013), 327--352.

\bibitem{ACN13b}
 N.~Achtsis, R.~Cools, and D.~Nuyens,
 \emph{Conditional sampling for barrier option pricing under the Heston
 model}, in: J.~Dick, F.Y. Kuo, G.W. Peters, I.H. Sloan (eds.), {M}onte {C}arlo
 and Quasi-{M}onte {C}arlo Methods 2012, pp. 253--269, Springer-Verlag,
 Berlin/Heidelberg (2013).

\bibitem{BST17}
 C.~Bayer, M.~Siebenmorgen, and R.~Tempone,
 \emph{Smoothing the payoff for efficient computation of Basket option
 prices}, Quantitive Finance, published online 20 July 2017, 1--15.

\bibitem{BG04}
 H.~Bungartz and M.~Griebel,
 \emph{Sparse grids}, Acta Numer.\ \textbf{13} (2004), 147--269.

\bibitem{DKS13}
  J.~Dick, F.~Y. Kuo, and I.~H. Sloan,
  \emph{High dimensional integration -- the quasi-Monte Carlo way},
  Acta Numer.\ 22 (2013), 133--288.

\bibitem{Glasserman} P.~Glasserman, Monte Carlo Methods in Financial
    Engineering, Springer-Verlag, 2003.

\bibitem{GlaSta01}
 P.~Glasserman and J.~Staum, \emph{Conditioning on one-step survival for
 barrier option simulations}, Oper.\ Res., \textbf{49} (2001), 923--937.

\bibitem{GKS10} M.~Griebel, F.~Y.~Kuo, and I.~H.~Sloan, \emph{The
    smoothing effect of the ANOVA decomposition},
    J.\ Complexity\ \textbf{26} (2010), 523--551.

\bibitem{GKS13} M.~Griebel, F.~Y.~Kuo, and I.~H.~Sloan, \emph{The
    smoothing effect of integration in $\bbR^d$ and the ANOVA decomposition},
    Math.\ Comp.\ \textbf{82} (2013), 383--400.

\bibitem{GKSnote} M.~Griebel, F.~Y. Kuo, and I.~H. Sloan, \emph{Note on
    ``the smoothing effect of integration in~$\bbR^d$ and the ANOVA
    decomposition''}, Math.\ Comp.\ \textbf{86} (2017), 1847--1854.

\bibitem{Hol11}
  M.~Holtz,
  Sparse Grid Quadrature in High Dimensions with Applications in Finance and Insurance
  (PhD thesis), Springer-Verlag, Berlin, 2011.

\bibitem{JoeKuo08}
 S.~Joe and F.~Y.~Kuo,
 \emph{Constructing Sobol$'$ sequences with better two-dimensional projection},
 SIAM J.\ Sci.\ Comput.\ \textbf{30} (2008), 2635--2654.

\bibitem{KSWW10} F.~Y.~Kuo, I.~H.~Sloan, G.~W.~Wasilkowski and
    H.~W\'ozniakoski, \emph{On decompositions of multivariate functions},
    Math.\ Comp., \textbf{79} (2010), 953--966.

\bibitem{Matousek}
 J.~Matousek,
 \emph{On the L2-discrepancy for anchored boxes},
  J.\ Complexity \textbf{14} (1998), 527--556.


\bibitem{NuyWat12}
 D.~Nuyens and B.~J.~Waterhouse,
 \emph{A global adaptive quasi-Monte Carlo algorithm for functions of low truncation dimension
 applied to problems from finance}, in: L.~Plaskota and H.~Wo\'zniakowski (eds.), {M}onte {C}arlo
 and Quasi-{M}onte {C}arlo Methods 2010, pp. 589--607, Springer-Verlag,
 Berlin/Heidelberg (2012).


\bibitem{Owen05} A. B. Owen, \emph{Multidimensional variation for
    quasi-{Monte Carlo}}, International Conference on Statistics in honour
    of Professor Kai-Tai Fang's 65th birthday. Jianqing Fan and Gang Li
    editors, 49--74 (2005).

\bibitem{Sche07} K. Scheicher, Complexity and effective dimension of
    discrete L\'evy areas, \emph{J. Complexity} \textbf{23} (2007),
    152--168.

\bibitem{Sobol90} I.~M.~Sobol$'$, \emph{Sensitivity estimates for
    nonlinear mathematical models}, Matematicheskoe Modeliravonic, 1990, V. 2, N. 1,
    112--118 (in Russian).  English translation in Mathematical Modeling
    and Computatinoal Experiment, 407--414 (1993).



\bibitem{WWH17}
 C.~Weng, X.~Wang, and Z.~He,
 \emph{Efficient computation of option prices and greeks by quasi-Monte Carlo
 method with smoothing and dimension reduction},
 SIAM J.\ Sci.\ Comput.\ \textbf{39} (2017), B298--B322.

\end{thebibliography}
\end{document}